\theoremstyle{thmstyleone}%
\newtheorem{theorem}{Theorem}
\newtheorem{lemma}{Lemma}
\newtheorem{assumption}{Assumption}
\theoremstyle{thmstyletwo}%
\newtheorem{remark}{Remark}%
\theoremstyle{thmstylethree}%
\begin{document}

\title[Optimal subsampling for functional  quantile regression]{Optimal subsampling for functional quantile regression}


\author[1]{\fnm{Qian} \sur{Yan}}\email{qianyan@cqu.edu.cn}

\author*[1]{\fnm{Hanyu} \sur{Li}}\email{lihy.hy@gmail.com or hyli@cqu.edu.cn}

\author[1]{\fnm{Chengmei} \sur{Niu}}\email{chengmeiniu@cqu.edu.cn}

\affil[1]{\orgdiv{College of Mathematics and Statistics}, \orgname{Chongqing University}, \orgaddress{
		\city{Chongqing }, \postcode{401331},  \country{ P.R. China}}}


\abstract{Subsampling 
	is an efficient method to deal with massive data. 
	In this paper, we investigate the optimal subsampling for linear quantile regression when the covariates are functions. The asymptotic distribution of the subsampling estimator is first derived. Then, we obtain the optimal subsampling probabilities based on the A-optimality criterion. Furthermore, the modified subsampling probabilities without estimating the densities of the response variables given the covariates are also proposed, which are easier to implement in practise. Numerical experiments on synthetic and real data show that the proposed methods always outperform the one with uniform sampling and can approximate the results based on full data well with  less computational efforts. 
}

\keywords{Functional quantile regression, A-optimality, Asymptotic distribution, Optimal subsampling, Massive data}

\pacs[MSC Classification]{62K05, 62G08, 62R10}

\maketitle

\section{Introduction}\label{sec.1}
Technological advances have made data easier to collect, store, and process, allowing multiple points in the temporal or spatial domain to be observed and recorded. These observations can be viewed as smooth functions with respect to time or space, which is called functional data in statistics. Functional data analysis is particularly important given the widespread availability of functional data. Traditional statistical methods, however, are no longer available due to limited computer resources as a result of these massive data. In order to overcome this problem, random subsampling methods are alternative approaches that have shown good performance in extracting meaningful information from big datasets and making statistical methods scalable to massive data.

To the best of our knowledge, there are two main types of random subsampling methods in statistical models: Randomized Numerical Linear Algebra (RandNLA) subsampling approaches and optimal subsampling approaches. Popular RandNLA subsampling approaches include uniform sampling, leverage score sampling and shrinkage leverage score sampling; see e.g., 
\citep{Drineas2006, Mahoney2011, Drineas2012}. 
Currently, some researchers have studied statistical properties of these RandNLA subsampling estimators for regression models. For example, 
\cite{Ma2015property} presented the bias and variance of subsampling estimator for least squares regression, and 
\cite{Wang2018property} and 
\cite{Homrighausen2019property} extended them to ridge regression. 
\cite{Raskutti2016property} and \cite{Dobriban2019property} investigated error bounds for the statistical efficiency on estimator based on subsampling least squares regression. 

On the other hand, several scholars have developed optimal subsampling methods for parametric regression problems. For example, 
\cite{Wang2018Optimal} proposed an inverse weighted subsampling method for logistic regression based on the A- or L-optimality criterion. Subsequently, a more efficient estimation method and Poisson subsampling were considered by \cite{Wang2019efficient} 
to correct the bias of the subsampling estimator given in \cite{Wang2018Optimal} and to improve the computational efficiency. Later, 
\cite{Yao2019softmax} and \cite{Ai2021optimal} extended the subsampling method to softmax regression and generalized linear models, respectively. Very recently, 
\cite{Wang2021quantile}, \cite{Ai2021quantile}, \cite{Fan2021quantile}, and \cite{Zhou2022quantile} employed the optimal subsampling method to ordinary quantile regression, and 
\cite{Shao2021quantile} and \cite{Yuan2022quantile} developed the subsampling for composite quantile regression. 

All of the aforementioned studies of subsampling methods focus on statistical models with scalar variables, and now only little work has been done in the area of subsampling for 
functional regression. As far as we know, 
these studies are mainly concerned with functional mean regression, which is an extension of the multiple mean regression model in the functional data setting. Specifically, 
\cite{he2022functional} proposed a functional principal subspace sampling probability for functional linear regression with scalar response, which eliminates the impact of eigenvalue inside the functional principal subspace and properly weights the residuals. 
\cite{liu2021functional} extended the optimal subsampling method to 
functional linear regression and functional generalized linear model with a scalar response. As we know, 
the quantile regression model proposed by 
\cite{Koenker1978quantile}  gives much more complete information about the conditional response distribution than the 
traditional mean regression, and exhibits robustness to outliers and data located in the tail of the conditional response distribution. {For functional quantile regression with scalar response, there are also 
many works; see e.g., \citep{ Cardot2004quantile, Cardot2005quantile, Chen2012, Kato2012, Sang2020quantile}.  
More specifically, \cite{ Cardot2004quantile, Cardot2005quantile} studied penalized spline estimator and its convergence rate. 
\cite{Chen2012} and \cite{Kato2012}  obtained the estimation of slope function based on functional principal component analysis basis. 
\cite{Sang2020quantile} studied penalized spline estimator for functional single index quantile regression.} However, these methods cannot be directly applied to large datasets, and, to the best of our knowledge, there is almost no work on random subsampling for functional quantile regression, in contrast to quantile regression with scalar variables, where there is a lot of work as previously mentioned.

Based on the above motivation, we investigate the 
optimal subsampling for quantile regression in massive data when the covariates are functions. 
We first derive the asymptotic distribution of the general subsampling estimator and then obtain the optimal subsampling probabilities by minimizing the asymptotic integrated mean squared error (IMSE) under the A-optimality criterion. 
In addition, we also provide a feasible modified version of the optimal subsampling probabilities to ensure the feasibility of the subsampling method. These subsampling probabilities are non-informative, which is consistent with the conclusion in 
\cite{Ai2021quantile}. 

The rest of this paper is organized as follows. Section \ref{sec.2} briefly introduces the scalar-on-function linear quantile regression problem and presents asymptotic behaviors of the penalized spline estimator. In Section \ref{sec.3}, we derive 
the asymptotic distribution of the subsampling estimator 
and the optimal subsampling probabilities 
based on the A-optimality criterion. The modified version of these 
probabilities is also considered in this section. Section \ref{sec.4} illustrates our methodology through both numerical simulations and real data sets. Section \ref{sec.5} concludes this paper with some discussions. All proofs are delivered to the Appendix.

\section{ Model and Estimation}\label{sec.2}
\subsection{Functional quantile regression}\label{sec.2.1}

Suppose that $\{x_i(t),y_i\}^n_{i=1} $ are $ n $ independent observations of $ (\boldsymbol{X}(t),\boldsymbol{Y}) $, where the covariates $ x_i(t) $ are square integrable functions defined on $ [0, 1] $, i.e., the elements of the space $ L^2[0, 1] $, and are assumed 
to be non-random, and $ y_i $ are scalar responses. A scalar-on-function linear quantile regression model is defined as follows
\begin{eqnarray}\label{2.1}
	y_i=\int^1_0 x_i(t)\beta(t)\mathrm{d}t+\epsilon_i \quad with \quad \mathrm{P}(\epsilon_i < 0 \mid x_i(t)) = \tau,
\end{eqnarray}
where $ \beta(t) $ is an unknown slope function satisfying $\beta(t)\in L^2[0, 1]$, $\epsilon_i$ are independent random error with probability density function $ f_{\epsilon\mid\boldsymbol{X}(t)}(\epsilon_i,x_i(t)) $, and the quantile level $ \tau \in(0, 1) $. Thus, the $ \tau $-th conditional quantile of $ y_i $ given $ x_i(t) $ is 
\begin{eqnarray*}\label{2.2}
	Q_\tau(y_i\mid x_i(t))=\int^1_0 x_i(t)\beta(t)\mathrm{d}t.		
\end{eqnarray*}

\subsection{Full data estimation of $ \beta(t)$}\label{sec.2.2}
To estimate the slope function $ \beta(t) $, we consider the B-spline basis functions defined on equispaced knots. Specifically, let $ K $ equispaced interior knots divide the interval $ [0,1] $ into $ K+1 $ sub-intervals, i.e., $ [t_j,t_{j+1}], j=0,\dots,K.$ 
In these intervals, we can find $ K + p+1 $ normalized B-spline basis functions $\{B_k(t),1\le k \le K+p+1\} $, as denoted by $ \boldsymbol{B}(t)=(B_1(t),B_2(t),\dots,B_{K+p+1}(t))^T $. They are the piecewise polynomials of degree $ p $  on each sub-interval $ [t_j,t_{j+1}]$ and $ p-1 $ times continuously differentiable on $ [0, 1] $. 
More properties of the B-spline function can be found in 
\cite{Boor2001spline}.
Thus, we can estimate $ \beta(t) $ using a linear combination of the normalized B-spline basis functions \citep{stone1985}, which allows us to find a vector $\boldsymbol{\hat{\theta}} \in \mathbb{R}^{K+p+1} $ such that 
\begin{eqnarray*}
	\hat{\beta}(t)=\sum_{k=1}^{K+p+1}\boldsymbol{\hat{\theta}}_k B_k(t)=\boldsymbol{B}^T(t)\boldsymbol{\hat{\theta}},		
\end{eqnarray*}
where $\boldsymbol{\hat{\theta}} $ is a solution of the minimization problem
\begin{eqnarray}\label{2.3}
	L(\boldsymbol{\theta};\lambda,K)=\sum_{i=1}^{n}\rho_\tau(y_i-\int ^1_0 x_i(t)\boldsymbol{B}^T(t)\boldsymbol{\theta} \mathrm{d}t)+\frac{\lambda}{2}\int^1_0\left\{\left(\boldsymbol{B}^{(q)}(t)\right)^T\boldsymbol{\theta}\right\}^2\mathrm{d}t,		
\end{eqnarray}
where $ \rho_\tau(\epsilon)=\epsilon\{\tau-I(\epsilon<0)\}$ is the quantile loss function with $ I(\cdot) $ being the indicator function, $ \lambda>0  $ is the smoothing parameter, and $\boldsymbol{B}^{(q)}(t) $ in the penalty term is the integrated squared $ q $-th order derivative of all the B-splines functions for some integer $ q \le p $.
Furthermore, let $\boldsymbol{B}_i=\int_{0}^{1}x_i(t)\boldsymbol{B}(t)\mathrm{d}t$ and $ \boldsymbol{D}_q=\int_{0}^{1}\boldsymbol{B}^{(q)}(t)\{\boldsymbol{B}^{(q)}(t)\}^T\mathrm{d}t $, the loss function in (\ref{2.3}) thus can be rewritten as 
\begin{eqnarray}\label{2.4}
	L(\boldsymbol{\theta};\lambda,K)=\sum_{i=1}^{n}\rho_\tau(y_i-\boldsymbol{B}^T_i\boldsymbol{\theta} )+\frac{\lambda}{2}\boldsymbol{\theta}^T\boldsymbol{D}_q\boldsymbol{\theta}.		
\end{eqnarray}

\subsection{Asymptotic theory of $ \hat\beta(t) $}\label{sec.2.3}
In this section, we show the asymptotic properties of $ \hat{\beta}(t)$ based on full data. To get the desired results, here we assume that the following assumptions are satisfied. 
\begin{assumption}\label{A1}
	For the functional covariates $ \boldsymbol{X}(t) $, assume there exist a constant $C_1$ such that $\Vert \boldsymbol{X}(t)\Vert_2 \le C_1 < \infty $ a.s..
\end{assumption}
\begin{assumption}\label{A2}
	Assume the unknown functional coefficient $ \beta(t) $ is sufficiently smooth. That is, $ \beta(t) $ 
	has a $ d'$-th derivative $ \beta^{(d')}(t) $ such that
	\begin{eqnarray*}
		\mid\beta^{(d')}(t)-\beta^{(d')}(s)\mid\le C_2\mid t-s\mid^v, \quad t,s\in [0,1],
	\end{eqnarray*}
	where the constant $ C_2>0 $ and $ v \in [0,1] $. In what follows, we set $ d = d' + v\ge p+1 $. 
\end{assumption}
\begin{assumption}\label{A3}
	Assume the density functions $ f_{\epsilon\mid\boldsymbol{X}(t)}(\epsilon_i,x_i(t)) $, $ i=1,2,\dots,n $, are continuous and uniformly bounded away from 0 and $ \infty $ at $ \epsilon_i=0 $. Furthermore, assume $ \mathrm{max}_{i=1,2,\dots,n} \mathrm{E}(\epsilon_i^4) < \infty $.
\end{assumption}	
\begin{assumption}\label{A4}
	Assume the smoothing parameter $ \lambda $ satisfies $ \lambda=o(n^{1/2}K^{1/2-2q}) $ with $ q\le p $.
\end{assumption}
\begin{assumption}\label{A5}
	Assume the number of knots $ K = o(n^{1/2})$ and $ K/n^{1/(2d+1)} \rightarrow \infty $ as $ n \rightarrow \infty$.
\end{assumption}

\begin{remark}\label{remark1}
	Assumptions \ref{A1} and 
	\ref{A2} 
	are quite usual in the functional setting; see e.g., 
	\citep{Cardot2005quantile,Claeskens2009linear,Yoshida2013quantile}. 
	Assumption \ref{A3} is a regular condition also used in 
	\cite{Koenker2005quantile, Cardot2005quantile} 
	and can imply the uniqueness of the conditional quantile of order $ \tau $. Assumptions \ref{A4} and 
	\ref{A5} are used to ensure the unbiasedness of  the estimator 
	\citep{liu2021functional}.
\end{remark}

To describe the asymptotic form of $ \hat{\beta}(t) $, we 
also need the following preparations. Define $ \boldsymbol{G}=\frac{1}{n}\sum_{i=1}^{n}\boldsymbol{B}_i\boldsymbol{B}^T_i $, $ \boldsymbol{G}_{\tau}=\frac{1}{n}\sum_{i=1}^{n}f_{\epsilon\mid\boldsymbol{X}(t)}(0,x_i(t))\boldsymbol{B}_i\boldsymbol{B}^T_i $ and $ \boldsymbol{H}_\tau=\boldsymbol{G}_\tau+\lambda/n\boldsymbol{D}_q $. 
Then, we have $ \Vert \boldsymbol{G} \Vert_{\infty}=O(K^{-1})$ and $ \Vert \boldsymbol{D}_q \Vert_{\infty}=O(K^{2q-1})$; see Lemma \ref{lem1} in the Appendix. Related results can also be found in 
\cite{Cardot2003linear,Claeskens2009linear,liu2021functional} 
and the references therein. Meanwhile, combining Assumptions \ref{A3} and \ref{A4}, we have $ \Vert \boldsymbol{H}^{-1}_\tau \Vert_{\infty}=O(K) $, where $\Vert \boldsymbol{A}\Vert_{\infty}=\mathrm{max}_{ij}\{\mid a_{ij}\mid\}$ for a matrix $\boldsymbol{A}=(a_{ij}) $.
Furthermore, Assumption \ref{A2} implies that there exists a spline function $ \beta_0(t)=\boldsymbol{B}^T(t)\boldsymbol{\theta}_0$, called spline approximation of $ \beta(t) $, which as $ K\rightarrow \infty $, satisfies
\begin{equation*}
	\mathop{\rm sup}\limits_{t \in [0,1]} \mid\beta(t)+b_a(t)-\boldsymbol{B}^T(t)\boldsymbol{\theta}_0\mid=o(K^{-d}),
\end{equation*}
where
\begin{eqnarray*}
	b_a(t)=-\frac{\beta^d(t)}{K^dd!}\sum_{j=0}^{K}I(t_j\le t< t_{j+1})\mathrm{Br}_d\left(\frac{t-t_j}{K^{-1}}\right)=O(K^{-d})    
\end{eqnarray*}
is the spline approximation bias with $ I(a < x < b) $ being the indicator function of an interval $ (a, b) $ and $ \mathrm{Br}_d(t) $ being the $ d$-th Bernoulli polynomial; see e.g., 
\cite{Zhou1998spline}. Thus, the penalized spline quantile estimator can be decomposed as
\begin{eqnarray*}
	\hat{\beta}(t)-\beta(t)=\hat{\beta}(t)-\beta_0(t)+\beta_0(t)-\beta(t)=\hat{\beta}(t)-\beta_0(t)+b_a(t)+o(K^{-d}).
\end{eqnarray*}

Now, we present the asymptotic distribution of $ \hat{\beta}(t)$ in the following Theorem.
\begin{theorem}\label{Th1} 
	Under the Assumptions \ref{A1}--\ref{A3}, for $ t \in [0, 1] $, as $ n \rightarrow \infty $, we have
	\begin{eqnarray}\label{2.5}
		\left\{\boldsymbol{B}(t)^T\boldsymbol{V}_0\boldsymbol{B}(t)\right\}^{-1/2}\sqrt{n/K}\left(\hat{\beta}(t)-\beta(t)-b_a(t)-b_{\lambda}(t)\right)\rightarrow N(0,1),
	\end{eqnarray}
	where the shrinkage bias is define as
	\begin{eqnarray*}
		b_{\lambda}(t)=-\frac{\lambda}{n}\boldsymbol{B}^{T}(t)\boldsymbol{H}^{-1}_\tau \boldsymbol{D}_q\boldsymbol{\theta}_0=O(\lambda K^{2q}/n), 
	\end{eqnarray*}
	and $ \boldsymbol{V}_0 $ is the asymptotic variance-covariance of $ \sqrt{n/K}(\hat{\boldsymbol{\theta}}-\boldsymbol{\theta}_0)$ and is given as 
	\begin{eqnarray*}
		\boldsymbol{V}_0=\frac{\tau(1-\tau)}{K}\boldsymbol{H}^{-1}_\tau \boldsymbol{G}\boldsymbol{H}^{-1}_\tau =O(1).
	\end{eqnarray*}	
\end{theorem}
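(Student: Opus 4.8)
The plan is to establish a Bahadur-type linear expansion for $\hat{\boldsymbol\theta}$ by exploiting the convexity of the penalized criterion in (\ref{2.4}), and then to read off the limit law of the scalar $\hat\beta(t)=\boldsymbol B^T(t)\hat{\boldsymbol\theta}$ from a triangular-array central limit theorem. First, since $\boldsymbol B^T(t)\boldsymbol\theta_0=\beta(t)+b_a(t)+o(K^{-d})$ uniformly and Assumption \ref{A5} gives $K^{2d+1}/n\to\infty$, that is $K^{-d}=o(\sqrt{K/n})$, the residual spline error beyond $b_a(t)$ is negligible at the rate $\sqrt{n/K}$; hence it suffices to show that $\boldsymbol B^T(t)(\hat{\boldsymbol\theta}-\boldsymbol\theta_0)$ equals $b_\lambda(t)$ plus a term that, rescaled by $\sqrt{n/K}$, is asymptotically $N\bigl(0,\boldsymbol B^T(t)\boldsymbol V_0\boldsymbol B(t)\bigr)$, up to $o_P(\sqrt{K/n})$.

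Next, put $\epsilon_i^\ast=y_i-\boldsymbol B_i^T\boldsymbol\theta_0=\epsilon_i-\gamma_i$ with $\gamma_i:=\boldsymbol B_i^T\boldsymbol\theta_0-\int_0^1 x_i(t)\beta(t)\mathrm{d}t=O(K^{-d})$, and for $\boldsymbol\delta=\boldsymbol\theta-\boldsymbol\theta_0$ apply Knight's identity $\rho_\tau(u-v)-\rho_\tau(u)=-v\{\tau-I(u<0)\}+\int_0^v\{I(u\le s)-I(u\le0)\}\mathrm{d}s$ termwise in (\ref{2.4}). This writes $L(\boldsymbol\theta_0+\boldsymbol\delta;\lambda,K)-L(\boldsymbol\theta_0;\lambda,K)$ as the sum of a linear part $-\sum_i\{\tau-I(\epsilon_i^\ast<0)\}\boldsymbol B_i^T\boldsymbol\delta+\lambda\boldsymbol\delta^T\boldsymbol D_q\boldsymbol\theta_0$, a curvature part $\sum_i\int_0^{\boldsymbol B_i^T\boldsymbol\delta}\{I(\epsilon_i^\ast\le s)-I(\epsilon_i^\ast\le0)\}\mathrm{d}s$, and the penalty remainder $\frac{\lambda}{2}\boldsymbol\delta^T\boldsymbol D_q\boldsymbol\delta$. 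Using convexity of $L(\cdot;\lambda,K)$ and the norm estimates recorded before the theorem and in Lemma \ref{lem1}, one obtains a preliminary rate $\|\hat{\boldsymbol\theta}-\boldsymbol\theta_0\|_2=O_P(\sqrt{K/n})$, localises to $\boldsymbol\delta$ of that order, and shows that the curvature part equals its conditional mean $\frac{1}{2}\sum_i f_{\epsilon\mid\boldsymbol X(t)}(0,x_i(t))(\boldsymbol B_i^T\boldsymbol\delta)^2$ up to $o_P\bigl(n\,\boldsymbol\delta^T\boldsymbol G_\tau\boldsymbol\delta\bigr)$ — the Taylor remainder controlled by the continuity and boundedness of the densities in Assumption \ref{A3}, the stochastic fluctuation by a second-moment bound using Assumptions \ref{A1}, \ref{A3} and Lemma \ref{lem1}. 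Hence the curvature part behaves like $\frac{n}{2}\boldsymbol\delta^T\boldsymbol G_\tau\boldsymbol\delta$, which absorbs the penalty remainder into $\frac{n}{2}\boldsymbol\delta^T\boldsymbol H_\tau\boldsymbol\delta$, and the convexity lemma for minimisers of convex random criteria yields
\[
\hat{\boldsymbol\theta}-\boldsymbol\theta_0=\boldsymbol H_\tau^{-1}\Bigl(\frac{1}{n}\sum_{i=1}^n\{\tau-I(\epsilon_i^\ast<0)\}\boldsymbol B_i\Bigr)-\frac{\lambda}{n}\boldsymbol H_\tau^{-1}\boldsymbol D_q\boldsymbol\theta_0+o_P\bigl(\sqrt{K/n}\bigr).
\]

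Finally, split the leading term into its mean and its centred part. The mean of the leading term, projected onto $\boldsymbol B(t)$, is analysed by a first-order Taylor expansion of the error distribution function at $0$ (justified by Assumption \ref{A3}) together with $\frac{1}{n}\sum_i f_{\epsilon\mid\boldsymbol X(t)}(0,x_i(t))\boldsymbol B_i\boldsymbol B_i^T=\boldsymbol G_\tau$, the identity $\boldsymbol H_\tau^{-1}\boldsymbol G_\tau=\boldsymbol I-\frac{\lambda}{n}\boldsymbol H_\tau^{-1}\boldsymbol D_q$, and the sharp spline-approximation estimates: the approximation-bias contribution reproduces $b_a(t)$ (cancelling the $b_a(t)$ already contained in $\boldsymbol B^T(t)\boldsymbol\theta_0$), the penalty contributes exactly $b_\lambda(t)$, and, using Assumption \ref{A4}, the remaining deterministic terms are $o(\sqrt{K/n})$; thus $\hat\beta(t)-\beta(t)-b_a(t)-b_\lambda(t)$ equals the centred stochastic term $\boldsymbol B^T(t)\boldsymbol H_\tau^{-1}\frac{1}{n}\sum_i\bigl[\{\tau-I(\epsilon_i^\ast<0)\}-\mathrm{E}\{\tau-I(\epsilon_i^\ast<0)\}\bigr]\boldsymbol B_i$ up to $o_P(\sqrt{K/n})$. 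This is a sum of independent mean-zero scalars bounded by $|\boldsymbol B^T(t)\boldsymbol H_\tau^{-1}\boldsymbol B_i|$; the Lindeberg condition $\max_i|\boldsymbol B^T(t)\boldsymbol H_\tau^{-1}\boldsymbol B_i|^2/\sum_i|\boldsymbol B^T(t)\boldsymbol H_\tau^{-1}\boldsymbol B_i|^2\to0$ follows from Lemma \ref{lem1} and the fast off-diagonal decay of $\boldsymbol H_\tau^{-1}$, so the Lindeberg--Feller theorem applies, and its variance rescaled by $n/K$ is $\frac{\tau(1-\tau)}{K}\boldsymbol B^T(t)\boldsymbol H_\tau^{-1}\boldsymbol G\boldsymbol H_\tau^{-1}\boldsymbol B(t)=\boldsymbol B^T(t)\boldsymbol V_0\boldsymbol B(t)$, giving (\ref{2.5}). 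The main obstacle is the curvature step: bounding the empirical-process remainder uniformly over the shrinking neighbourhood while the parameter dimension $K+p+1$ diverges with $n$, so that the quadratic term genuinely dominates — this is precisely where the moment condition $\max_i\mathrm{E}(\epsilon_i^4)<\infty$ and the rate restrictions $\lambda=o(n^{1/2}K^{1/2-2q})$, $K=o(n^{1/2})$ of Assumptions \ref{A3}--\ref{A5}, together with the norm estimates of Lemma \ref{lem1}, are indispensable; the overall structure parallels the penalized-spline quantile asymptotics of \cite{Yoshida2013quantile} and their functional counterpart in \cite{liu2021functional}.
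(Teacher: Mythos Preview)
Your proposal is precisely the argument the paper defers to: the paper's own proof of Theorem~\ref{Th1} merely cites \cite{Yoshida2013quantile}, and the Knight-identity decomposition, convexity-lemma localisation, quadratic expansion of the curvature term, and Lindeberg--Feller CLT you outline are exactly the steps the paper carries out in full detail (with subsampling weights added) in its proof of Theorem~\ref{Th3} via Lemmas~\ref{lem3}--\ref{lem4}. One small remark: the non-zero mean of the score $\psi_\tau(\epsilon_i^\ast)$ is handled in the paper (proof of Lemma~\ref{lem3}) by showing it is directly $o(1)$ at the $\sqrt{n/K}$ scale rather than by ``reproducing $b_a(t)$'' as you describe, and Theorem~\ref{Th1} is stated under Assumptions~\ref{A1}--\ref{A3} only, so your appeals to Assumptions~\ref{A4}--\ref{A5} technically exceed the stated hypotheses---though the paper's own order claims $b_\lambda(t)=O(\lambda K^{2q}/n)$ and $\boldsymbol V_0=O(1)$ already lean on the bound $\|\boldsymbol H_\tau^{-1}\|_\infty=O(K)$, which uses Assumption~\ref{A4}.
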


Since Assumption \ref{A5} ensures that the order of $ K $ is $ n^v $, where $ v\ge 1/(2d+1) $, 
the spline approximation bias $ b_a(t)=O(K^{-d}) $ is negligible. In addition, from Assumption \ref{A4}, we can get $ b_{\lambda}(t)=o(\sqrt{K/n}) $. Thus the shrinkage bias is also negligible.
By the above discussion, we have the following theorem.
\begin{theorem}\label{Th2}
	Under the Assumptions \ref{A1}--\ref{A5}, for $ t \in [0, 1] $, as $ n \rightarrow \infty $,
	\begin{eqnarray*}
		\{\boldsymbol{B}(t)^T\boldsymbol{V}_0\boldsymbol{B}(t)\}^{-1/2}\sqrt{n/K}(\hat{\beta}(t)-\beta(t))\rightarrow N(0,1),
	\end{eqnarray*}
	where $ \boldsymbol{V}_0 $ is given in Theorem \ref{Th1}.   
\end{theorem}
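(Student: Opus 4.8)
The plan is to deduce Theorem~\ref{Th2} directly from Theorem~\ref{Th1} by showing that, after rescaling by the rate $\sqrt{n/K}$ and the normalizing factor $\{\boldsymbol{B}(t)^T\boldsymbol{V}_0\boldsymbol{B}(t)\}^{-1/2}$, the spline approximation bias $b_a(t)$ and the shrinkage bias $b_\lambda(t)$ both vanish asymptotically, and then invoking Slutsky's theorem. So the work reduces to two order comparisons plus a bookkeeping argument.

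First I would record that the normalizing scalar $\boldsymbol{B}(t)^T\boldsymbol{V}_0\boldsymbol{B}(t)$ is bounded above and below by positive constants, uniformly in $t\in[0,1]$. By the local-support property of B-splines, $\boldsymbol{B}(t)$ has only $p+1$ nonzero and uniformly bounded entries; combined with $\boldsymbol{V}_0=\tau(1-\tau)K^{-1}\boldsymbol{H}^{-1}_\tau\boldsymbol{G}\boldsymbol{H}^{-1}_\tau=O(1)$, this gives the upper bound $\{\boldsymbol{B}(t)^T\boldsymbol{V}_0\boldsymbol{B}(t)\}^{-1/2}=O(1)$ at once. The lower bound uses that $\boldsymbol{G}=\frac1n\sum_i\boldsymbol{B}_i\boldsymbol{B}_i^T$ has eigenvalues of order $K^{-1}$ and that Assumption~\ref{A3} forces $f_{\epsilon\mid\boldsymbol{X}(t)}(0,\cdot)$, hence $\boldsymbol{G}_\tau$ and then $\boldsymbol{H}_\tau$, to be bounded below, so that $\|\boldsymbol{H}_\tau^{-1}\|_\infty=O(K)$ from Lemma~\ref{lem1} is matched by a corresponding lower bound on the relevant sub-block of $\boldsymbol{V}_0$. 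Consequently it is enough to check $\sqrt{n/K}\,b_a(t)\to0$ and $\sqrt{n/K}\,b_\lambda(t)\to0$.

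For the spline approximation bias, $b_a(t)=O(K^{-d})$ gives $\sqrt{n/K}\,|b_a(t)|=O\!\left(\sqrt{n}\,K^{-d-1/2}\right)$, and Assumption~\ref{A5} ($K/n^{1/(2d+1)}\to\infty$, i.e. $K^{2d+1}/n\to\infty$) is precisely the statement $\sqrt{n}\,K^{-d-1/2}\to0$. For the shrinkage bias, $b_\lambda(t)=O(\lambda K^{2q}/n)$ gives $\sqrt{n/K}\,|b_\lambda(t)|=O\!\left(\lambda K^{2q-1/2}/\sqrt{n}\right)$, and Assumption~\ref{A4} ($\lambda=o(n^{1/2}K^{1/2-2q})$) makes this $o(1)$. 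Hence $\{\boldsymbol{B}(t)^T\boldsymbol{V}_0\boldsymbol{B}(t)\}^{-1/2}\sqrt{n/K}\,(b_a(t)+b_\lambda(t))\to0$.

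Finally I would write $\{\boldsymbol{B}(t)^T\boldsymbol{V}_0\boldsymbol{B}(t)\}^{-1/2}\sqrt{n/K}(\hat\beta(t)-\beta(t))$ as the sum of $\{\boldsymbol{B}(t)^T\boldsymbol{V}_0\boldsymbol{B}(t)\}^{-1/2}\sqrt{n/K}(\hat\beta(t)-\beta(t)-b_a(t)-b_\lambda(t))$, which converges in distribution to $N(0,1)$ by Theorem~\ref{Th1}, plus the deterministic term $\{\boldsymbol{B}(t)^T\boldsymbol{V}_0\boldsymbol{B}(t)\}^{-1/2}\sqrt{n/K}(b_a(t)+b_\lambda(t))$, which tends to $0$; Slutsky's theorem then yields the conclusion. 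The only genuinely delicate point is the two-sided control of $\boldsymbol{B}(t)^T\boldsymbol{V}_0\boldsymbol{B}(t)$—the upper estimate is immediate, but the lower estimate hinges on the positive-definiteness of the B-spline Gram matrix and the density lower bound in Assumption~\ref{A3}; the rest is a routine comparison of the rates imposed by Assumptions~\ref{A4} and~\ref{A5}.
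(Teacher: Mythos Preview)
Your proposal is correct and follows essentially the same approach as the paper: the paper's own ``proof'' consists of the paragraph preceding Theorem~\ref{Th2} (showing $b_a(t)=o(\sqrt{K/n})$ via Assumption~\ref{A5} and $b_\lambda(t)=o(\sqrt{K/n})$ via Assumption~\ref{A4}) together with the one-line remark that Theorem~\ref{Th2} then follows directly from Theorem~\ref{Th1}. You have simply filled in details the paper omits---in particular the two-sided control of $\boldsymbol{B}(t)^T\boldsymbol{V}_0\boldsymbol{B}(t)$ and the explicit invocation of Slutsky's theorem---so your write-up is more complete than, but in the same spirit as, the original.
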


	\section{The optimal subsampling}\label{sec.3}
\subsection{Subsampling estimator and its asymptotic distribution}\label{sec.3.1}

We first introduce a random subsampling approach, in which subsamples are taken at random with replacement based on some sampling distributions. Let $ R_i $ be the total number of times that the $ i $-th data point is selected from the full data in a subsample and $\sum_{i=1}^{n}R_i=r$, which is 
carried out by using a random subsampling method with the probabilities $ \pi_i $, $ i = 1,\dots,n $, such that $\sum_{i=1}^{n}\pi_i=1 $. Each $ R_i $ has a binomial distribution $ \rm{Bin}\mathnormal{(r,\pi_i)} $ since we use subsampling with replacement. 
Because $ \pi_i $ may depend on the full data $ \mathcal{F}_n = \{(x_i(t), y_i), i = 1,\dots,n,t\in[0,1]\}$, we need to add inverses of $ \pi_i$'s as weights to the objective function of the subsample to guarantee that the loss function is unbiased.
Thus, the subsampling estimator of the spline coefficient vector, says $\boldsymbol{\tilde{\theta}}$, is determined by minimizing 
\begin{eqnarray}\label{3.1}
	L^{\ast}(\boldsymbol{\theta};\lambda,K)=\frac{1}{r}\sum_{i=1}^{n}\frac{R_i\rho_\tau(y_i-\boldsymbol{B}^ T_i\boldsymbol{\theta})}{\pi_i}+\frac{\lambda}{2}\boldsymbol{\theta}^T\boldsymbol{D}_q\boldsymbol{\theta}.		
\end{eqnarray}

Now we investigate the asymptotic properties of $ \tilde{\beta}(t)=\boldsymbol{B}^T(t)\boldsymbol{\tilde{\theta}} $ under Assumptions \ref{A6} listed below, which 
restricts the weights in the loss function (\ref{3.1}) and hence 
can be used to protect the loss function from inflating greatly by data points with extremely small subsampling probabilities. This assumption is also required in 
\cite{Ai2021optimal} and 
\cite{liu2021functional}.
\begin{assumption}\label{A6}
	Assume that $ \mathrm{max}_{i=1,\dots,n} (n\pi_i)^{-1} = O(r^{-1})$ and $ r=o(K^2) $.
\end{assumption}

\begin{theorem}\label{Th3} 
	Under the Assumptions \ref{A1}--\ref{A6}, letting $ \eta=\mathrm{lim}_{n\rightarrow \infty} r/n $, for $ t \in [0, 1] $, as $ r,n\rightarrow \infty $, we have
	\begin{eqnarray*}
		\left\{\boldsymbol{B}(t)^T\boldsymbol{V}\boldsymbol{B}(t)\right\}^{-1/2}\sqrt{r/K}\left(\tilde{\beta}(t)-\beta(t)\right)\rightarrow N(0,1),
	\end{eqnarray*}
	in distribution, where
	\begin{eqnarray*}
		\boldsymbol{V}=\frac{\tau(1-\tau)}{K}\boldsymbol{H}^{-1}_\tau (\boldsymbol{V}_\pi+\eta \boldsymbol{G})\boldsymbol{H}^{-1}_\tau ,\quad \boldsymbol{V}_\pi=\frac{1}{n^2}\sum_{i=1}^{n}\frac{\boldsymbol{B}_i\boldsymbol{B}^T_i}{\pi_i}.
	\end{eqnarray*}	        
\end{theorem}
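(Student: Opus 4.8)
\noindent\emph{Proof proposal.}
The plan is to mirror the convexity/quantile--process argument already used for the full--data estimator in Theorem~\ref{Th1}, but now carrying along the random inverse--probability weights $R_i/\pi_i$ and separating the two sources of randomness, namely the sampling mechanism and the data. Write $\psi_\tau(u)=\tau-I(u<0)$ and $\epsilon_i^{0}=y_i-\boldsymbol{B}_i^{T}\boldsymbol{\theta}_0$, and reparametrise by $\boldsymbol{\gamma}=\sqrt{r/K}\,(\boldsymbol{\theta}-\boldsymbol{\theta}_0)$, so that $\tilde{\boldsymbol{\gamma}}=\sqrt{r/K}\,(\tilde{\boldsymbol{\theta}}-\boldsymbol{\theta}_0)$ minimises the convex function $\Psi_r(\boldsymbol{\gamma})=r\{L^{\ast}(\boldsymbol{\theta}_0+\sqrt{K/r}\,\boldsymbol{\gamma};\lambda,K)-L^{\ast}(\boldsymbol{\theta}_0;\lambda,K)\}$. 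Applying Knight's identity $\rho_\tau(u-v)-\rho_\tau(u)=-v\psi_\tau(u)+\int_0^{v}\{I(u\le s)-I(u\le 0)\}\,\mathrm{d}s$ to each summand with $v=\delta_i:=\sqrt{K/r}\,\boldsymbol{B}_i^{T}\boldsymbol{\gamma}$, and expanding the quadratic penalty, yields the decomposition
\begin{eqnarray*}
\Psi_r(\boldsymbol{\gamma})=-\boldsymbol{\gamma}^{T}\boldsymbol{W}_r+\Xi_r(\boldsymbol{\gamma}),\qquad \boldsymbol{W}_r=\sqrt{K/r}\sum_{i=1}^{n}\frac{R_i}{\pi_i}\boldsymbol{B}_i\psi_\tau(\epsilon_i^{0})-\lambda\sqrt{Kr}\,\boldsymbol{D}_q\boldsymbol{\theta}_0,
\end{eqnarray*}
where $\Xi_r(\boldsymbol{\gamma})$ collects the integral remainders together with the quadratic part of the penalty.

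Next I would show that $\Xi_r(\boldsymbol{\gamma})=\tfrac{nK}{2}\boldsymbol{\gamma}^{T}\boldsymbol{H}_\tau\boldsymbol{\gamma}+o_p(nK)$ for each fixed $\boldsymbol{\gamma}$, and uniformly on compact sets. This is the subsampled analogue of the corresponding step for Theorem~\ref{Th1}: taking $\mathrm{E}[\cdot\mid\mathcal{F}_n]$ replaces $R_i/\pi_i$ by $r$, so that $\mathrm{E}[\Xi_r(\boldsymbol{\gamma})\mid\mathcal{F}_n]$ equals $\tfrac{K}{2}\boldsymbol{\gamma}^{T}\big(\sum_i f_{\epsilon\mid\boldsymbol{X}(t)}(0,x_i(t))\boldsymbol{B}_i\boldsymbol{B}_i^{T}+\lambda\boldsymbol{D}_q\big)\boldsymbol{\gamma}+o(nK)=\tfrac{nK}{2}\boldsymbol{\gamma}^{T}\boldsymbol{H}_\tau\boldsymbol{\gamma}+o(nK)$ by Assumptions~\ref{A1}--\ref{A4} and the continuity of the densities; the conditional variance of $\Xi_r$ is controlled using the weight bound $\max_i(n\pi_i)^{-1}=O(r^{-1})$ and the rate $r=o(K^2)$ from Assumption~\ref{A6}, together with the norm bounds $\Vert\boldsymbol{G}\Vert_\infty=O(K^{-1})$ and $\Vert\boldsymbol{D}_q\Vert_\infty=O(K^{2q-1})$ of Lemma~\ref{lem1} transferred to the relevant quadratic forms. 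A standard convexity argument (as in the proof of Theorem~\ref{Th1}) then gives $\tilde{\boldsymbol{\gamma}}=(nK\boldsymbol{H}_\tau)^{-1}\boldsymbol{W}_r+o_p(1)$, i.e.\ the Bahadur representation
\begin{eqnarray*}
\tilde{\boldsymbol{\theta}}-\boldsymbol{\theta}_0=\boldsymbol{H}_\tau^{-1}\Big\{\frac{1}{n}\boldsymbol{S}_n+\frac{1}{nr}\Big(\sum_{i=1}^{n}\frac{R_i}{\pi_i}\boldsymbol{B}_i\psi_\tau(\epsilon_i^{0})-r\boldsymbol{S}_n\Big)\Big\}-\frac{\lambda}{n}\boldsymbol{H}_\tau^{-1}\boldsymbol{D}_q\boldsymbol{\theta}_0+o_p\!\big(\sqrt{K/r}\big),
\end{eqnarray*}
where $\boldsymbol{S}_n=\sum_{i=1}^{n}\boldsymbol{B}_i\psi_\tau(\epsilon_i^{0})$ is the full--data score; the third term is exactly the shrinkage--bias vector of $b_\lambda(t)$.

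The core of the argument is the central limit theorem for the resulting scalar $\boldsymbol{B}^{T}(t)(\tilde{\boldsymbol{\theta}}-\boldsymbol{\theta}_0)$. The first bracketed piece, $\tfrac1n\boldsymbol{H}_\tau^{-1}\boldsymbol{S}_n$, is precisely the leading stochastic term of the full--data estimator analysed in Theorem~\ref{Th1}; multiplied by $\sqrt{r/K}=\sqrt{r/n}\,\sqrt{n/K}$ it contributes an asymptotically $N\!\big(0,\eta\,\boldsymbol{B}^{T}(t)\boldsymbol{V}_0\boldsymbol{B}(t)\big)$ component with $\boldsymbol{V}_0=\tau(1-\tau)\boldsymbol{H}_\tau^{-1}\boldsymbol{G}\boldsymbol{H}_\tau^{-1}/K$. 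For the second piece I would condition on $\mathcal{F}_n$: since $R_i=\sum_{\ell=1}^{r}I\{\ell\text{-th draw}=i\}$, it is an average of $r$ i.i.d.\ (given $\mathcal{F}_n$) mean--zero summands, so after the Cram\'er--Wold reduction to the scalar functional $\boldsymbol{B}^{T}(t)\boldsymbol{H}_\tau^{-1}(\cdot)$ a Lindeberg--Feller CLT applies; the Lindeberg condition follows from Assumption~\ref{A6} (bounding $(n\pi_i)^{-1}$), the moment bound $\mathrm{E}(\epsilon_i^{4})<\infty$ and the density boundedness in Assumption~\ref{A3}, and a direct second--moment computation using the multinomial structure of $(R_1,\dots,R_n)$ and $\mathrm{Var}(\psi_\tau(\epsilon_i^{0}))\to\tau(1-\tau)$ identifies the limiting conditional variance as $\tau(1-\tau)\,\boldsymbol{B}^{T}(t)\boldsymbol{H}_\tau^{-1}\boldsymbol{V}_\pi\boldsymbol{H}_\tau^{-1}\boldsymbol{B}(t)/K$ in probability. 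Because this second piece is conditionally independent of $\mathcal{F}_n$, a conditional characteristic--function argument combines the two: the variances add, giving $\boldsymbol{B}^{T}(t)\boldsymbol{V}\boldsymbol{B}(t)$ with $\boldsymbol{V}=\tau(1-\tau)\boldsymbol{H}_\tau^{-1}(\boldsymbol{V}_\pi+\eta\boldsymbol{G})\boldsymbol{H}_\tau^{-1}/K$. Finally, just as in the passage from Theorem~\ref{Th1} to Theorem~\ref{Th2}, Assumptions~\ref{A4}, \ref{A5} and the rate part of Assumption~\ref{A6} make the spline bias $b_a(t)=O(K^{-d})$ and the shrinkage bias $b_\lambda(t)=O(\lambda K^{2q}/n)$ of smaller order than $\sqrt{K/r}$, so they vanish after multiplication by $\sqrt{r/K}$, and Slutsky's theorem delivers the stated convergence (interpreted conditionally on $\mathcal{F}_n$ where $\pi_i$ are data--dependent, as is customary in the optimal--subsampling literature).

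The step I expect to be the main obstacle is the conditional CLT for the sampling fluctuation in the regime of a growing number $K+p+1$ of basis functions: one must verify the Lindeberg condition and pin down the limiting conditional variance precisely enough that the randomness of $\boldsymbol{V}_\pi$ (and of weights that may depend on $\mathcal{F}_n$) does not spoil the self--normalisation, while simultaneously keeping the remainder $\Xi_r-\tfrac{nK}{2}\boldsymbol{\gamma}^{T}\boldsymbol{H}_\tau\boldsymbol{\gamma}$ negligible. This is exactly where the rate condition $r=o(K^2)$ in Assumption~\ref{A6} (the subsampling counterpart of $K=o(n^{1/2})$) is needed, and checking it is delicate because every bound must first be tracked in the $\Vert\cdot\Vert_\infty$ norm via Lemma~\ref{lem1} and then transferred to the quadratic forms through $\Vert\boldsymbol{H}_\tau^{-1}\Vert_\infty=O(K)$.
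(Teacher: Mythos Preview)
Your proposal is correct but proceeds by a genuinely different decomposition from the paper. The paper does not split the score into a full--data piece plus a sampling fluctuation; instead, in its Lemma~\ref{lem3} it treats the entire weighted score
\[
-\sqrt{K/r}\sum_{i=1}^{n}\frac{R_i}{n\pi_i}\boldsymbol{B}_i^{T}\boldsymbol{\delta}\,\psi_\tau(u_i)
\]
as a single sum and computes its \emph{unconditional} mean and variance via the law of total variance, so that $\mathrm{E}\{\mathrm{Var}(\cdot\mid\mathcal{F}_n)\}$ contributes the $\boldsymbol{V}_\pi$ term and $\mathrm{Var}\{\mathrm{E}(\cdot\mid\mathcal{F}_n)\}$ contributes the $\eta\boldsymbol{G}$ term; a single Lindeberg--Feller argument is then applied to the combined randomness. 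Your route---applying Theorem~\ref{Th1} to $n^{-1}\boldsymbol{S}_n$, a conditional CLT over the $r$ i.i.d.\ draws to the centred sampling piece, and then combining by a conditional characteristic--function argument---is more modular: it reuses the full--data result directly, makes the provenance of each variance component explicit, and has the technical advantage that the conditional CLT genuinely involves independent summands (the $r$ draws), whereas the paper's Lindeberg check in Lemma~\ref{lem3} is written over the $n$ indices with multinomially correlated $R_i$'s and is therefore somewhat informal. The price you pay is the extra step of showing that the conditional variance stabilises to $\tau(1-\tau)\boldsymbol{V}_\pi$ in probability and of carrying out the combining argument; the paper avoids both by working unconditionally from the start. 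Your treatment of the integral remainder $\Xi_r$ and of the bias terms $b_a(t)$, $b_\lambda(t)$ matches Lemma~\ref{lem4} and the end of the paper's proof essentially verbatim.
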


\subsection{Optimal subsampling probabilities}\label{sec.3.2}

To better approximate $\beta(t)$, it is important to choose the proper subsampling probabilities. It would be meaningful if the asymptotic integrated mean squared error (IMSE) of $ \tilde{\beta}(t) $ attains its minimum. By Theorem \ref{Th3} and observing that 
$ \tilde{\beta}(t) $ is asymptotically unbiased, we have the asymptotic IMSE of $ \tilde{\beta}(t) $ as follows
\begin{eqnarray}\label{3.2}
	IMSE(\tilde{\beta}(t)-\beta(t))=\frac{K}{r}\int_{0}^{1}\boldsymbol{B}^T(t)\boldsymbol{V}\boldsymbol{B}(t)\mathrm{d}t.
\end{eqnarray}
Note that, in (\ref{3.2}), $ \boldsymbol{V} $ is the asymptotic variance-covariance matrix of $ \sqrt{r/K}(\boldsymbol{\tilde{\theta}}-\boldsymbol{\theta}_0)$ and the integral inequality $ \int_{0}^{1}\boldsymbol{B}^T(t)\boldsymbol{V}\boldsymbol{B}(t)\mathrm{d}t \le \int_{0}^{1}\boldsymbol{B}^T(t)\boldsymbol{V'}\boldsymbol{B}(t)\mathrm{d}t $ holds if and only if $ \boldsymbol{V}\le \boldsymbol{V'} $ holds in the Lowner-ordering  sense. Thus, we focus on minimizing the asymptotic variance-covariance matrix $\boldsymbol{V}$ and choose the subsampling probabilities such that $\mathrm{tr}(\boldsymbol{V})$ is minimized. This is called the A-optimality criterion in optimal experimental design; see e.g., 
\cite{Atkinson2007}. Using this criterion, we are able to derive an explicit expression of optimal subsampling probabilities in the following theorem.
\begin{theorem}[A-optimality]\label{Th4} If the subsampling probabilities $\pi_i, i=1, \dots,n,$ are chosen as
	\begin{eqnarray}\label{3.3}
		\pi_i^{FAopt}=\frac{\Vert \boldsymbol{H}^{-1}_{\tau}\boldsymbol{B}_i\Vert_2}{\sum_{i=1}^{n}\Vert \boldsymbol{H}^{-1}_{\tau}\boldsymbol{B}_i\Vert_2},
	\end{eqnarray}
	then the total asymptotic MSE of $ \sqrt{r/K}(\boldsymbol{\tilde{\theta}}-\boldsymbol{\theta}_0)$, $\mathrm{tr}(\boldsymbol{V})$, attains its minimum, and so does 
	the asymptotic IMSE of $ \tilde{\beta}(t) $. 
\end{theorem}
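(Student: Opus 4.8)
The plan is to isolate the only part of $\boldsymbol{V}$ that depends on the sampling distribution $\{\pi_i\}$ and then minimize it by an elementary inequality. First I would use Theorem \ref{Th3}: since $\boldsymbol{V}=\frac{\tau(1-\tau)}{K}\boldsymbol{H}^{-1}_\tau(\boldsymbol{V}_\pi+\eta\boldsymbol{G})\boldsymbol{H}^{-1}_\tau$ and $\boldsymbol{H}_\tau$, $\boldsymbol{G}$, $\eta$ do not involve $\{\pi_i\}$, minimizing $\mathrm{tr}(\boldsymbol{V})$ over the probability simplex is equivalent to minimizing $\mathrm{tr}(\boldsymbol{H}^{-1}_\tau\boldsymbol{V}_\pi\boldsymbol{H}^{-1}_\tau)$. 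Inserting $\boldsymbol{V}_\pi=\frac{1}{n^2}\sum_{i=1}^{n}\boldsymbol{B}_i\boldsymbol{B}^T_i/\pi_i$ and using the symmetry of $\boldsymbol{H}_\tau$ together with the cyclic invariance of the trace gives
\[
\mathrm{tr}\bigl(\boldsymbol{H}^{-1}_\tau\boldsymbol{V}_\pi\boldsymbol{H}^{-1}_\tau\bigr)=\frac{1}{n^2}\sum_{i=1}^{n}\frac{\boldsymbol{B}^T_i\boldsymbol{H}^{-2}_\tau\boldsymbol{B}_i}{\pi_i}=\frac{1}{n^2}\sum_{i=1}^{n}\frac{\Vert\boldsymbol{H}^{-1}_\tau\boldsymbol{B}_i\Vert_2^2}{\pi_i}.
\]

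Next, writing $a_i=\Vert\boldsymbol{H}^{-1}_\tau\boldsymbol{B}_i\Vert_2^2\ge 0$, the task reduces to minimizing $\sum_{i=1}^{n}a_i/\pi_i$ subject to $\pi_i>0$ and $\sum_{i=1}^{n}\pi_i=1$. By the Cauchy--Schwarz inequality,
\[
\sum_{i=1}^{n}\frac{a_i}{\pi_i}=\Bigl(\sum_{i=1}^{n}\frac{a_i}{\pi_i}\Bigr)\Bigl(\sum_{i=1}^{n}\pi_i\Bigr)\ge\Bigl(\sum_{i=1}^{n}\sqrt{a_i}\Bigr)^{2}=\Bigl(\sum_{i=1}^{n}\Vert\boldsymbol{H}^{-1}_\tau\boldsymbol{B}_i\Vert_2\Bigr)^{2},
\]
with equality if and only if $\sqrt{a_i}/\pi_i$ is constant in $i$, i.e. $\pi_i\propto\Vert\boldsymbol{H}^{-1}_\tau\boldsymbol{B}_i\Vert_2$; normalizing to a probability vector yields exactly $\pi_i^{FAopt}$ in (\ref{3.3}), so $\mathrm{tr}(\boldsymbol{V})$ attains its minimum there. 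The same conclusion also follows from a Lagrange-multiplier computation, the objective being convex in $\{\pi_i\}$ on the simplex; along the way I would dispose of the degenerate case $\boldsymbol{B}_i=\boldsymbol{0}$ (whose summand vanishes identically) and note that $\pi_i^{FAopt}$ may have to be truncated to meet Assumption \ref{A6}, which is what motivates the modified probabilities proposed afterwards.

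Finally, for the IMSE claim I would invoke the fact recorded before (\ref{3.2}) that the map $\boldsymbol{V}\mapsto\int_0^1\boldsymbol{B}^T(t)\boldsymbol{V}\boldsymbol{B}(t)\,\mathrm{d}t$ is monotone for the L\"owner order, so that A-optimality of $\boldsymbol{V}$ is the natural target for minimizing the asymptotic IMSE $\frac{K}{r}\int_0^1\boldsymbol{B}^T(t)\boldsymbol{V}\boldsymbol{B}(t)\,\mathrm{d}t$ in (\ref{3.2}); since the $\{\pi_i\}$-dependence of $\boldsymbol{V}$ is confined to $\boldsymbol{V}_\pi$, the choice $\pi_i^{FAopt}$ delivers the claimed minimum. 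I expect the main obstacle to be precisely this last transfer — making rigorous the passage from trace-optimality of $\boldsymbol{V}$ to optimality of the weighted integral $\int_0^1\boldsymbol{B}^T(t)\boldsymbol{V}\boldsymbol{B}(t)\,\mathrm{d}t$ — whereas the algebraic reduction and the Cauchy--Schwarz step are entirely routine.
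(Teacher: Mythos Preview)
Your argument is correct and coincides with the paper's: both isolate the $\pi$-dependent term $\mathrm{tr}(\boldsymbol{H}_\tau^{-1}\boldsymbol{V}_\pi\boldsymbol{H}_\tau^{-1})=\frac{1}{n^2}\sum_i\Vert\boldsymbol{H}_\tau^{-1}\boldsymbol{B}_i\Vert_2^2/\pi_i$ and apply Cauchy--Schwarz with $\sum_i\pi_i=1$ to obtain equality exactly at $\pi_i\propto\Vert\boldsymbol{H}_\tau^{-1}\boldsymbol{B}_i\Vert_2$. The paper's proof in fact stops there and does not separately justify the IMSE clause; your worry about transferring trace-optimality to optimality of $\int_0^1\boldsymbol{B}^T(t)\boldsymbol{V}\boldsymbol{B}(t)\,\mathrm{d}t=\mathrm{tr}\bigl(\boldsymbol{V}\!\int_0^1\boldsymbol{B}(t)\boldsymbol{B}^T(t)\,\mathrm{d}t\bigr)$ is well placed, since this weighted trace is minimized by $\pi_i\propto\bigl\Vert\bigl(\int\boldsymbol{B}\boldsymbol{B}^T\bigr)^{1/2}\boldsymbol{H}_\tau^{-1}\boldsymbol{B}_i\bigr\Vert_2$ rather than by $\pi_i^{FAopt}$ in general---the paper is tacitly treating A-optimality as the design surrogate for the IMSE, as flagged in the discussion preceding (\ref{3.2}).
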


However, $ \boldsymbol{H}_{\tau} $ in (\ref{3.3}) depends on the density functions of $\epsilon_i\ (i=1,\dots, n)$
at zero given the respective $ x_i(t) $ and hence the implementation of this subsampling method requires reasonable estimation for all the density functions $ f_{\epsilon\mid\boldsymbol{X}(t)}(0,x_i(t))$, which are often infeasible 
in practice without additional information. In addition, it also requires the chosen of smoothing parameter $ \lambda $ in $ \boldsymbol{H}_{\tau} $ and the calculation of $ \Vert \boldsymbol{H}^{-1}_{\tau}\boldsymbol{B}_i\Vert_2 $ for $ i = 1, 2, \dots, n $, which costs
$ O(n(K+p+1)^2) $. These weaknesses make this optimal subsampling method not suitable for practical use. 
While, for the independent identically distributed (i.i.d.) errors case,  the $ \boldsymbol{G}_{\tau} $ in $ \boldsymbol{H}_{\tau} $ can be simply replaced by $ f_{\epsilon\mid\boldsymbol{X}(t)}(0,x(t))\boldsymbol{G}$ since $ f_{\epsilon\mid\boldsymbol{X}(t)}(0,x_i(t))=f_{\epsilon\mid\boldsymbol{X}(t)}(0,x(t)) $ for all $ i $. 

As observed in (\ref{3.3}), only $ \boldsymbol{V}_\pi $ involves $\pi_i$ in the asymptotic variance-covariance matrix $\boldsymbol{V}=\tau(1-\tau)K^{-1}\boldsymbol{H}^{-1}_\tau (\boldsymbol{V}_\pi+\eta \boldsymbol{G}) \boldsymbol{H}^{-1}_\tau $, and  $\boldsymbol{H}^{-1}_\tau \boldsymbol{V}_\pi \boldsymbol{H}^{-1}_\tau\le \boldsymbol{H}^{-1}_\tau \boldsymbol{V}_{\pi'}\boldsymbol{H}^{-1}_\tau $ if and only if $ \boldsymbol{V}_\pi \le \boldsymbol{V}_{\pi'} $ in the Lowner-ordering. Thus, we focus on $ \boldsymbol{V}_\pi $ and choose to minimize its trace. which can be interpreted as minimizing the asymptotic MSE of $ \sqrt{r/K}\boldsymbol{H}_{\tau}(\boldsymbol{\tilde{\theta}}-\boldsymbol{\theta}_0) $ due to its asymptotic unbiasedness. This is called L-optimality criterion in optimal experimental design 
\citep{Atkinson2007}. Therefore, to circumvent density function estimation and save calculation cost, we consider the modified optimal criterion: minimizing $\mathrm{tr}(\boldsymbol{V}_\pi) $.
\begin{theorem}[L-optimality]\label{Th5} If the subsampling probabilities $\pi_i, i=1, \dots,n,$ are chosen as
	\begin{eqnarray}\label{3.4}
		\pi_i^{FLopt}=\frac{\Vert \boldsymbol{B}_i\Vert_2}{\sum_{i=1}^{n}\Vert \boldsymbol{B}_i\Vert_2},
	\end{eqnarray}
	then $\mathrm{tr}(\boldsymbol{V}_\pi)$ attains its minimum.
\end{theorem}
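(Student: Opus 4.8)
The plan is to reduce the Löwner-order / trace minimization to a one-dimensional optimization in the weights and then settle it with the Cauchy--Schwarz inequality (or, equivalently, a Lagrange-multiplier computation). Note first that the reduction from ``$\boldsymbol{V}_\pi\le\boldsymbol{V}_{\pi'}$ in the Löwner order'' to minimizing $\mathrm{tr}(\boldsymbol{V}_\pi)$ was already made in the paragraph preceding the theorem, so no monotonicity argument has to be repeated inside the proof.

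First I would rewrite the objective. Since $\mathrm{tr}(\boldsymbol{B}_i\boldsymbol{B}^T_i)=\boldsymbol{B}^T_i\boldsymbol{B}_i=\Vert\boldsymbol{B}_i\Vert_2^2$, linearity of the trace gives
\begin{eqnarray*}
	\mathrm{tr}(\boldsymbol{V}_\pi)=\frac{1}{n^2}\sum_{i=1}^{n}\frac{\Vert\boldsymbol{B}_i\Vert_2^2}{\pi_i}.
\end{eqnarray*}
Hence the problem is: minimize $\sum_{i=1}^{n}\Vert\boldsymbol{B}_i\Vert_2^2/\pi_i$ over $(\pi_1,\dots,\pi_n)$ with $\pi_i>0$ and $\sum_{i=1}^{n}\pi_i=1$. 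Applying Cauchy--Schwarz to the vectors with components $\Vert\boldsymbol{B}_i\Vert_2/\sqrt{\pi_i}$ and $\sqrt{\pi_i}$, and using $\sum_i\pi_i=1$,
\begin{eqnarray*}
	\sum_{i=1}^{n}\frac{\Vert\boldsymbol{B}_i\Vert_2^2}{\pi_i}=\left(\sum_{i=1}^{n}\frac{\Vert\boldsymbol{B}_i\Vert_2^2}{\pi_i}\right)\left(\sum_{i=1}^{n}\pi_i\right)\ge\left(\sum_{i=1}^{n}\Vert\boldsymbol{B}_i\Vert_2\right)^2,
\end{eqnarray*}
so $\mathrm{tr}(\boldsymbol{V}_\pi)\ge n^{-2}\big(\sum_{i=1}^{n}\Vert\boldsymbol{B}_i\Vert_2\big)^2$, a bound free of $\pi$. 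Equality in Cauchy--Schwarz holds exactly when $\Vert\boldsymbol{B}_i\Vert_2/\sqrt{\pi_i}\propto\sqrt{\pi_i}$, i.e. $\pi_i\propto\Vert\boldsymbol{B}_i\Vert_2$; together with $\sum_i\pi_i=1$ this forces $\pi_i=\pi_i^{FLopt}$ as in (\ref{3.4}). Substituting $\pi_i^{FLopt}$ back shows the lower bound is attained, so it is the (unique) minimizer. Alternatively, the same conclusion follows from the Lagrangian $\sum_i\Vert\boldsymbol{B}_i\Vert_2^2/\pi_i+\nu\big(\sum_i\pi_i-1\big)$: the stationarity condition $-\Vert\boldsymbol{B}_i\Vert_2^2/\pi_i^2+\nu=0$ gives $\pi_i\propto\Vert\boldsymbol{B}_i\Vert_2$, $\nu$ is fixed by the constraint, and convexity of $\pi\mapsto\sum_i\Vert\boldsymbol{B}_i\Vert_2^2/\pi_i$ on the positive orthant certifies this stationary point as the global minimum.

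There is essentially no hard step; the only points requiring a word of care are the degenerate ones. If $\Vert\boldsymbol{B}_i\Vert_2=0$ for some $i$, that term is absent from the objective and the recipe (\ref{3.4}) still returns an admissible distribution — assigning such a point probability zero is harmless since $R_i/\pi_i$ never enters for it — and if all $\boldsymbol{B}_i=0$ the claim is vacuous. I would mention this briefly and otherwise keep the proof to the two displays above.
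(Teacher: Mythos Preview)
Your proof is correct and follows essentially the same route as the paper: expand $\mathrm{tr}(\boldsymbol{V}_\pi)=n^{-2}\sum_i\Vert\boldsymbol{B}_i\Vert_2^2/\pi_i$, multiply by $\sum_i\pi_i=1$, and apply Cauchy--Schwarz to obtain the lower bound with equality iff $\pi_i\propto\Vert\boldsymbol{B}_i\Vert_2$. Your added Lagrange-multiplier remark and the comment on degenerate $\boldsymbol{B}_i$ are fine but go beyond what the paper includes.
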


The functional L-optimal subsampling probabilities $\pi^{FLopt}_i $ do not depend on the densities of $\epsilon_i$ given the respective $ x_i(t) $, and thus are much easier to implement compared with the functional A-optimal subsampling probabilities $ \pi^{FAopt}_i $. In addition, $ \pi^{FLopt}_i $ requires $O(n(K+p+1))$ flops to compute, which is much cheaper than $ \pi^{FAopt}_i $ as $ K $ increases.

Furthermore, it is worth noting that the subsampling probabilities $\pi^{FLopt}_i\  (i=1,\dots, n)$ in (\ref{3.4}) do not contain responses and do not depend on the covariates directly. In fact, the structural information of the covariates is described by the expression $ \Vert \boldsymbol{B}_i\Vert_2= \Vert\int_{0}^{1}x_i(t)\boldsymbol{B}(t)\mathrm{d}t \Vert_2$, which is similar to the statistical leverage score. As a result, the subsampling probabilities 
result in the non-informative sampling. This allows us to try different models based on the subsamples. It is in contrast to the subsampling probabilities used in functional linear regression, which result in 
the informative sampling 
\citep{liu2021functional}.



\subsection{Tuning parameter selection}\label{sec.3.3}

There are four parameters in estimation of $ \beta(t) $: the number of knots $ K $, the degree $ p $ for spline functions, the smoothing parameter $ \lambda $ and the order of derivation $ q $ for the estimator. However, the number of knots $ K $ is not a crucial parameter because smoothing is controlled by the roughness penalty parameter $ \lambda $; see e.g., 
\cite{Ruppert2002knot,Cardot2003linear}. 
In addition, the degree of spline functions $ p $ and the order of derivatives $ q $ are also 
known to be less important. This is because, in practice, we usually smooth with B-splines of degree 3 and a second-order penalty. Once other parameters are fixed, a natural way to determine the parameter $ \lambda $ is to minimize a leave-one-out cross-validation criterion. We preferably employ the generalized approximate cross-validation (GACV) criterion introduced by 
\cite{Yuan2006GACV} in smoothing splines problems, which is defined by
\begin{eqnarray*}
\mathrm{GACV}(\lambda)=\frac{\sum_{i=1}^{n}\rho_\tau(y_i-\boldsymbol{B}_i^T\boldsymbol{\hat{\theta}})}{n-\mathrm{df}_{\lambda}},
\end{eqnarray*}
where $ \mathrm{df}_{\lambda} $ denotes the effective degrees of freedom of the fit. In the present paper, we implement  $ \boldsymbol{\hat{\theta}}=(\boldsymbol{B}^T\boldsymbol{WB} + \lambda \boldsymbol{D})^{-1}\boldsymbol{B}^T\boldsymbol{W}\boldsymbol{y}$ and $ \mathrm{df}_{\lambda}=\mathrm{tr}\left(\boldsymbol{B}(\boldsymbol{B}^T\boldsymbol{WB} + \lambda \boldsymbol{D})^{-1}\boldsymbol{B}^T\boldsymbol{W}\right) $ in the penalized iteratively reweighted least squares (PIRLS) method which is useful to solve the functional quantile regression problem; see e.g., 
\cite{Cardot2005quantile,reiss2012}. 
In the above expressions, $ \boldsymbol{W} $ is a diagonal matrix whose diagonal elements are weights,
\begin{eqnarray*}
w_i^{(k)}=\frac{\tau-I[y_i-\boldsymbol{B}_i^T\boldsymbol{\hat{\theta}}^{(k)}]}{2[y_i-\boldsymbol{B}_i^T\boldsymbol{\hat{\theta}}^{(k)}]},\quad i=1,2,\dots,n,
\end{eqnarray*}
which are iterated until convergence; see Appendix A of 
\cite{reiss2012}. However, using full data to select the optimal $ \lambda $ is computationally expensive, so we 
select the smoothing parameter $ \lambda $ by GACV under the optimal subsample data.

\section{Numerical Experiments}
\label{sec.4}

In this section, we aim to study the finite sample performance of the proposed methods 
by using synthetic and real data. 
\subsection{Simulation}\label{sec.4.1}

We generated the functional covariates in a similar way to that adopted in 
\cite{liu2021functional}. More specifically, the functional covariates were identically and independently generated as: 
\begin{eqnarray*}
	x_i(t)=\sum a_{ij}\boldsymbol{B}_j(t), \quad i=1,2,\dots,n,
\end{eqnarray*}
where $ \boldsymbol{B}_j(t) $ are cubic B-spline basis functions that are sampled at 100 equally spaced points between 0 and 1. We consider the following three different distributions for the basis coefficient $ \boldsymbol{A}=(a_{ij}) $:
\begin{enumerate}[\hspace{2em}(1)]
	\item  \textbf{mvNormal}. Multivariate normal distribution $ N(\boldsymbol{0}, \boldsymbol{\Sigma}) $, where $ \boldsymbol{\Sigma}_{ij}=0.5^{\mid i-j\mid} $;
	\item \textbf{mvT3}. Multivariate $ t $ distribution with 3 degree of freedom, $t_3(\boldsymbol{0},\boldsymbol{\Sigma})$;
	\item  \textbf{mvT2}. Multivariate $ t $ distribution with 2 degree of freedom, $t_2(\boldsymbol{0},\boldsymbol{\Sigma})$.
\end{enumerate}
The responses are generated as following:
\begin{eqnarray*}
	y_i=\int ^1_0 x_i(t)\beta(t)\mathrm{d}t+\epsilon_i,\quad i=1,2,\dots,n,
\end{eqnarray*}
where the slope function $ \beta(t)=2t^2+0.25t+1 $ and the random errors, $ \epsilon_i $'s, are generated in three cases:
\begin{enumerate}[\hspace{2em}(1)]
	\item  \textbf{Normal}. The standard normal distribution;
	\item  \textbf{T1}. $ t_1 $ distribution ;
	\item  \textbf{Hetero}. The standard normal distribution times $\int ^1_0 \mid x_i(t)(t+1)\mid \mathrm{d}t $.
\end{enumerate}
The first two designs consider symmetric i.i.d. random errors while the last one considers conditional heteroscedastic errors. 

We first take $ n=10^5 $ for training, $ m = 1000 $ for testing and $ \tau=0.5,0.75 $ to investigate the influence of different quantile level on performance of the proposed subsampling methods. From Assumption \ref{A5}, we let the number of knots $ K= \lceil n^{1/4}\rceil$. We shall compare the functional A-optimal subsampling (FAopt) and L-optimal subsampling (FLopt) methods with the uniform subsampling  (Unif) method. For fair comparison, we use the same basis functions and the same smoothing parameter in the three methods with the same full data. For each $ \tau $, we will compute the root integrated mean squared error (IMSE) from 1000 repetitions:
\begin{eqnarray*}
	\rm{IMSE}=\frac{1}{1000}\sum_{k=1}^{1000}\sqrt{\int ^1_0 \left\{\tilde{\beta}^{(k)}(t)-\beta(t)\right\}^2\mathrm{d}t},
\end{eqnarray*}
where $ \tilde{\beta}^{(k)}(t) $ is the estimator from the $ k $-th run. All the experiments are implemented in R programming language on a PC with an Intel I5 processor and 16GB memory. 	

\begin{figure}[t]
	\centering
		\includegraphics[width=0.95\textwidth]{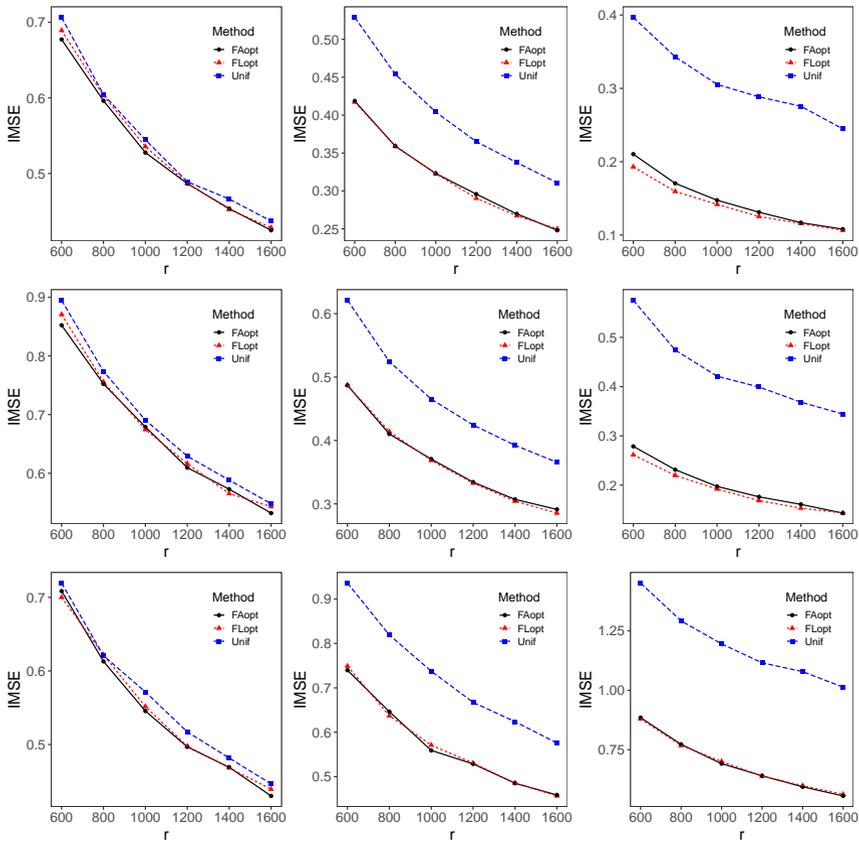}	
	\caption{IMSE for different subsampling size $ r $ with different distributions when $ \tau=0.5 $ and $ n=10^5 $.}\label{fig1}
\end{figure}
\begin{figure}[h]
	\centering
		\includegraphics[width=0.95\textwidth]{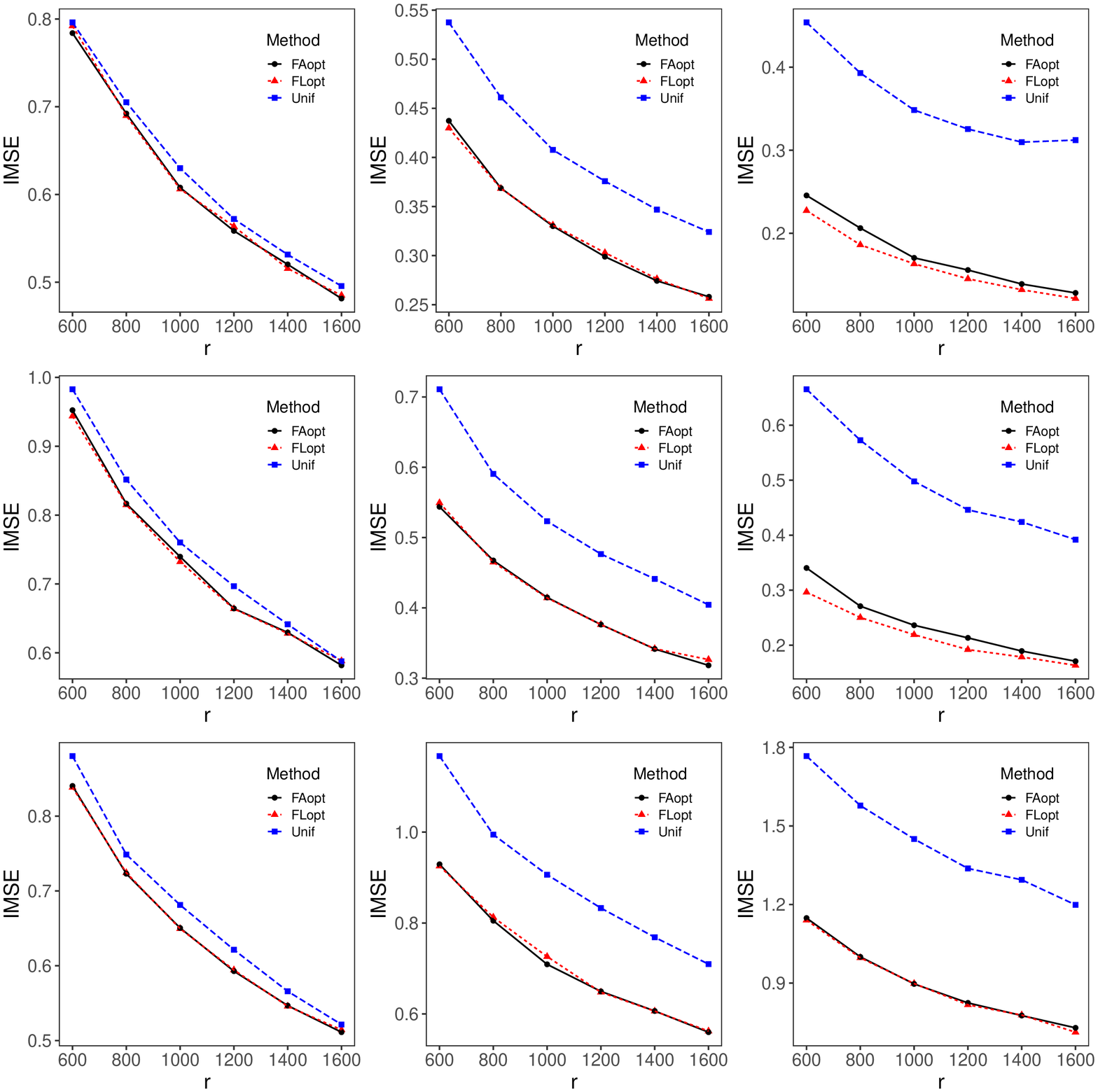}	
		\caption{IMSE for different subsampling size $ r $ with different distributions when $ \tau=0.75 $ and $ n=10^5 $}\label{fig2}
	\end{figure}

Figures \ref{fig1} and  \ref{fig2} display the simulation results corresponding to various subsampling sizes of 600, 800, 1000, 1200, 1400 and 1600 under different quantile level\footnote {In Figures \ref{fig1}, \ref{fig2}, and \ref{fig3}, the three columns correspond to the three distributions of the basis coefficients (mvNormal, mvT3, mvT2), respectively, and the three rows correspond to the three distributions of random errors (Normal, T1, Hetero), respectively. 
}. 
It is clear to see that the FAopt and FLopt subsampling methods always have smaller IMSEs than the Unif subsampling method for all cases, which is in agreement with the theoretical results that they aim to minimize the asymptotic IMSEs of the subsampling estimator. Moreover, the advantages of the FAopt and FLopt subsampling methods become more significant as the tail of the basis coefficient distribution becomes heavier. Besides, we also see that the FAopt and FLopt methods tend to perform similarly, even though the fact that the FLopt method does not theoretically minimize the MSE of the subsample spline coefficient $\boldsymbol{\tilde{\theta}} $. 

To further assess the relative performance of the proposed methods in comparison with the full data estimator, the prediction efficiency (PE) is adopted on the test data of simulation, which is defined as follows:
\begin{eqnarray*}
	PE=\frac{\sum_{i}\left[\int ^1_0 x_i(t)\beta(t)\mathrm{d}t-\int ^1_0 x_i(t)\tilde{\beta}(t)\mathrm{d}t\right]^2}{\sum_{i} \left[\int ^1_0 x_i(t)\beta(t)\mathrm{d}t-\int ^1_0 x_i(t)\hat{\beta}(t)\mathrm{d}t\right]^2},\quad i\in{\rm testset}.
\end{eqnarray*}
We plot the logarithm of prediction efficiency for the FAopt, FLopt and Unif methods when $ \tau=0.75 $ in Figure \ref{fig3}, from which we can see that the FAopt and FLopt methods significantly outperform the Unif method, and the FLopt method has comparable or slightly smaller prediction efficiency than the FAopt method. Results for the case $ \tau=0.5 $ are similar and thus are omitted.    

\begin{figure}[t]
	\centering
		\includegraphics[width=0.95\textwidth]{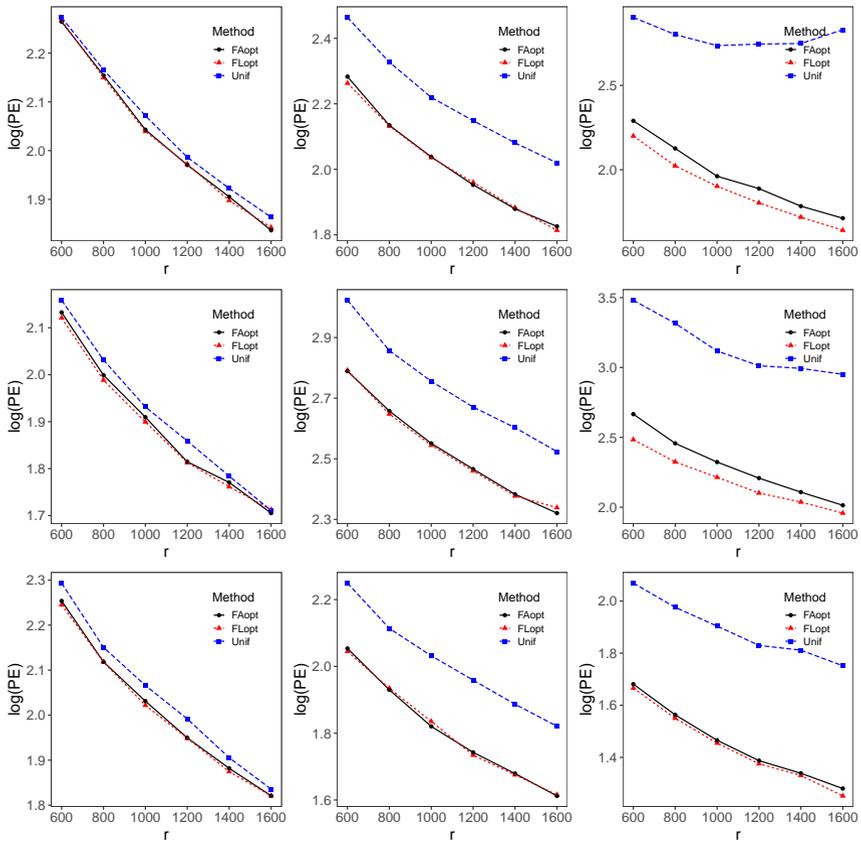}
		\caption{Log prediction efficiency for different subsampling size $ r $ with different distributions when $ \tau=0.75 $ and $ n=10^5 $ for 1000 repetitions.}\label{fig3}
	\end{figure}
To evaluate the computational efficiency of the subsampling methods, we record the computing time of the three subsampling methods. 
We use the function \textbf{Sys.time()} 
to count start and end times of the corresponding code only for the estimated part of $ \boldsymbol{\tilde{\theta}} $. Since all the cases have similar performance, we only show the results of mvNormal - Normal datasets here. The results on different $ r $ for the FAopt, FLopt and Unif subsampling methods with $ \tau=0.75 $ and $ n=10^5$ are given in Table \ref{tab1}. It is not surprising to find that the Unif method takes the least time because it does not need to calculate the additional optimal subsampling probabilities. As we expected, the FLopt method is faster than the FAopt method, which agrees with the theoretical analysis. The computing time for using full data is also given in the last row of table \ref{tab1}, which is the longest one and confirms that our proposed methods can reduce the computational burden.

\begin{table}
	\begin{center}
		\begin{minipage}{\textwidth}
			\caption{CPU seconds for different subsampling size $ r $ with $ \tau= 0.75 $ and $ n=10^5 $ for 1000 repetitions.}\label{tab1}%
			\begin{tabular*}{\textwidth}{@{\extracolsep{\fill}}ccccccc@{\extracolsep{\fill}}}
				\toprule
				\multirow{2}{*}{Method} & \multicolumn{6}{c}{$ r $}\\
				\cline{2-7}
				&600 & 800 & 1000 & 1200 & 1400 & 1600\\		
				\midrule
				FLopt &0.155 & 0.166 & 0.180 & 0.201 & 0.215 & 0.227\\
				FAopt &0.472 & 0.462 & 0.469 & 0.496 & 0.515 & 0.533\\
				Unif &0.115 & 0.133 & 0.142 & 0.161 & 0.178 & 0.193	\\
				\multicolumn{7}{l}{Full data CPU seconds: 4.086}\\
				\botrule
			\end{tabular*}
		\end{minipage}
	\end{center}
\end{table}

To further demonstrate the performance of our proposed methods in large datasets, we set the full data size to $ n=10^4 $, $ 10^5$, $ 10^6 $ and $ 5\times 10^6 $, respectively. In addition, 
we let $r=1000$, $ \lambda = 0.001 $ and enlarge the number of knots for spline function to $ K=50 $. Table \ref{tab2} presents the CPU seconds for repeating different subsampling methods for 500 times. The results indicate that our proposed methods can improve the computational efficiency compared with the full data, and their advantage is more significant as the full data size increases. 
For our two methods, we recommend the FLopt method for practical use.
\begin{table}
	\begin{center}
		\begin{minipage}{\textwidth}
			\caption{CPU seconds for different full data size $ n $ with $ r=1000$ when $ \tau= 0.75 $, $ K=50 $ and $\lambda=0.001 $ for 500 repetitions.}\label{tab2}%
			\begin{tabular*}{\textwidth}{@{\extracolsep{\fill}}lcccc@{\extracolsep{\fill}}}
				\toprule
				\multirow{2}{*}{Method} & \multicolumn{4}{c}{$ n $}\\
				\cline{2-5}
				&$ 10^4 $ & $ 10^5 $ & $ 10^6 $ & $ 5\times10^6 $\\		
				\midrule 			
				FLopt &0.307 & 0.431 & 0.656 & 2.666\\
				FAopt &0.355 & 0.790 & 5.415 & 33.625\\
				Unif &0.304 & 0.378 & 0.383 & 0.575 \\			
				Full &2.47 & 24.543 & 238.940 & 1668.454\\
				\botrule
			\end{tabular*}
		\end{minipage}
	\end{center}
\end{table}

\subsection{Beijing multi-site air-quality data}\label{sec.4.2}
Carbon monoxide (CO) is formed by incomplete combustion of fossil fuels and is ubiquitous in ambient air. The adverse health effects of very high CO concentrations, such as CO poisoning and cardiovascular deaths, are well documented; see e.g., \cite{Liu2018CO, Kinoshita2020CO, Chen2021CO}. 
Thus, air quality prediction is vital to management of human health, especially the respiratory system. There has been extensive research on prediction CO concentrations, see e.g., 
\cite{Moazami2016CO,Shams2020CO}. 

\begin{figure}[t]
	\centering
		\includegraphics[scale=0.6]{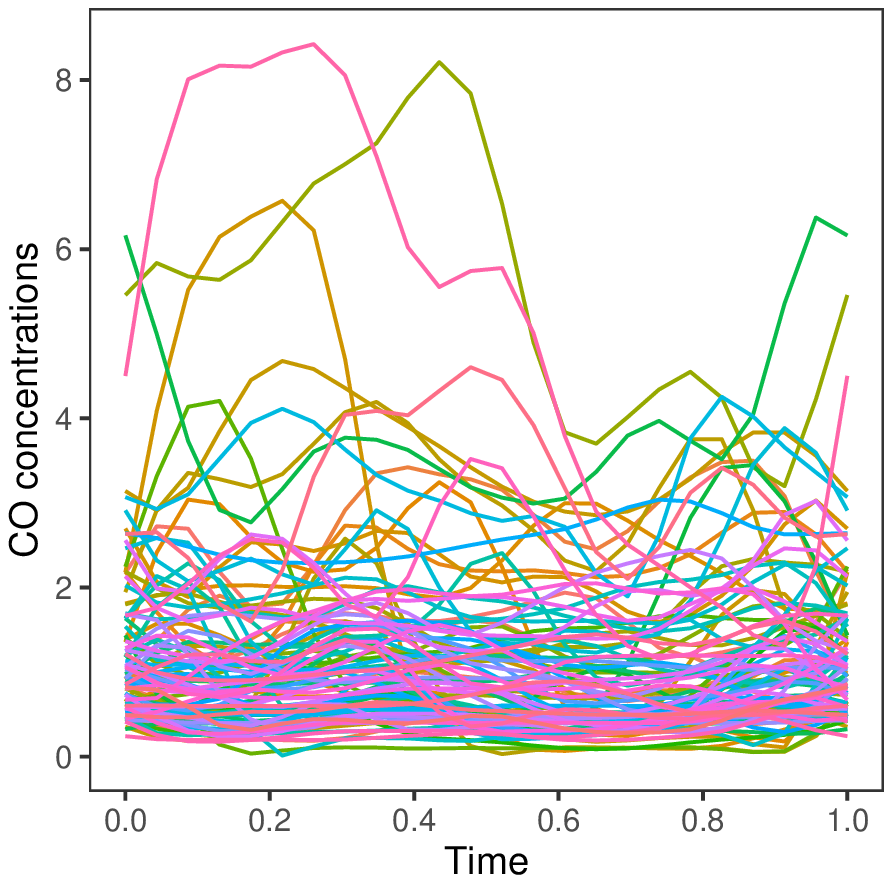}
	\quad
	\includegraphics[scale=0.6]{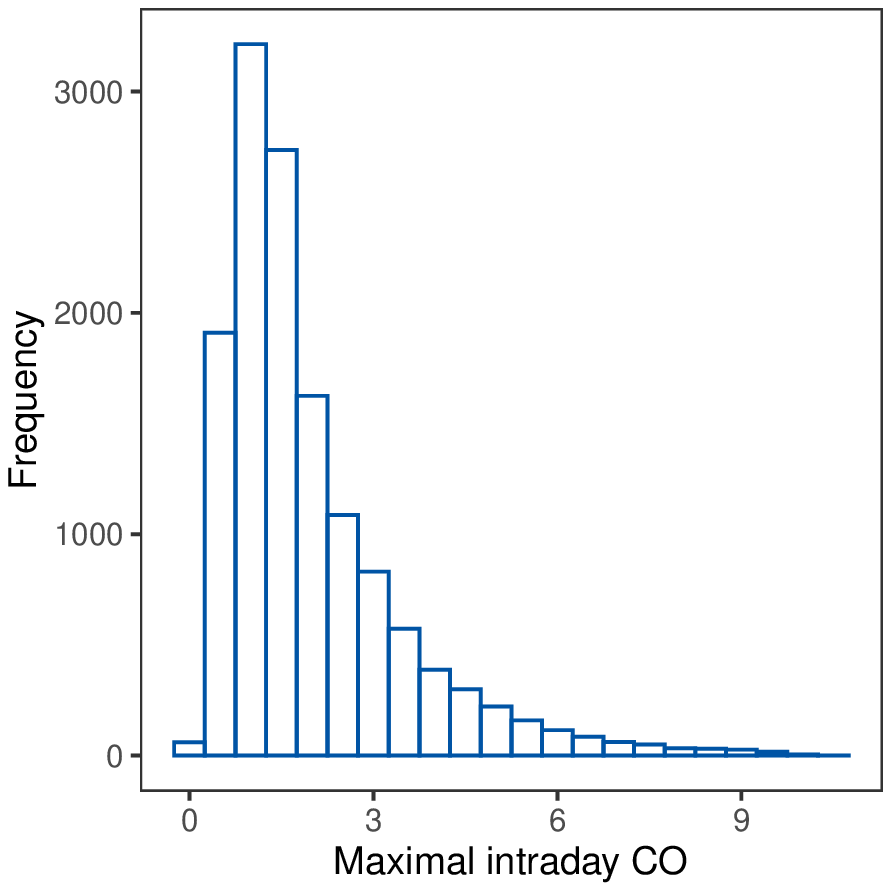}
	\caption{Left subfigure: A random subset of 100 curves of 24-hourly CO concentrations. Right subfigure: Histogram of the maximal values of intraday CO concentrations.}
	\label{fig4}
\end{figure}

Now we analyze a dataset availabled from \url{https://archive-beta.ics.uci.edu/ml/datasets/beijing+multi+site+air+quality+data}. This data set consists of hourly air pollutants data from 12 nationally controlled air-quality monitoring sites in Beijing from March 1, 2013 to February 28, 2017. Our primary interest here is to predict the maximum CO concentrations ($mg/m^3 $) using the CO trajectory (24 hour) of the last day. After removing 4001 days' records with missing values, we have a dataset of 13531 days' complete records. It is randomly partitioned into a training set of $ n=10824 $ observations and $ m=2707 $ for testing. The raw observations are first transformed into functional data using 15 Fourier basis functions. This transformation can be implemented with the \textbf{Data2fd} function in the \textbf{fda} package, suggested in 
\cite{Sang2020quantile}. A random subset of 100 curves of 24-hourly CO concentrations is presented in the left panel of Figure \ref{fig4}, where the time scale has been transformed to $ [0,1] $. The right panel of Figure \ref{fig4} further supports the fact that the covariates are heavy-tailed. It depicts the histogram of the maximal values of intraday CO concentrations.

\begin{figure}[t]
	\centering
		\includegraphics[scale=0.6]{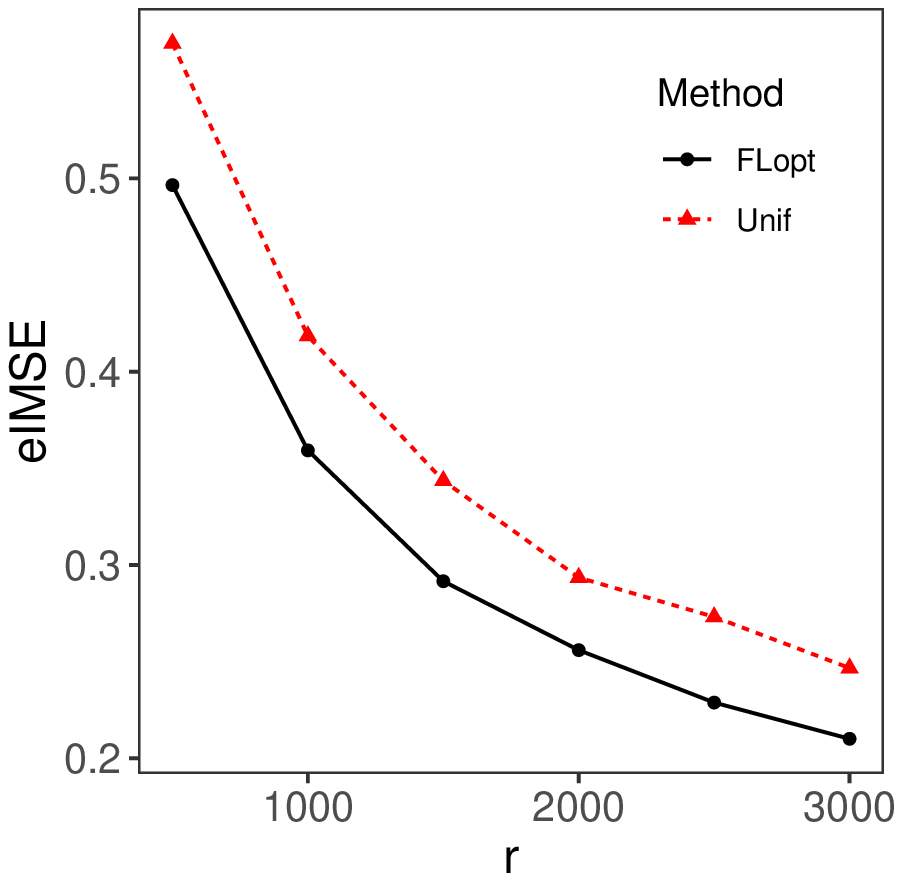}
		\quad
		\includegraphics[scale=0.6]{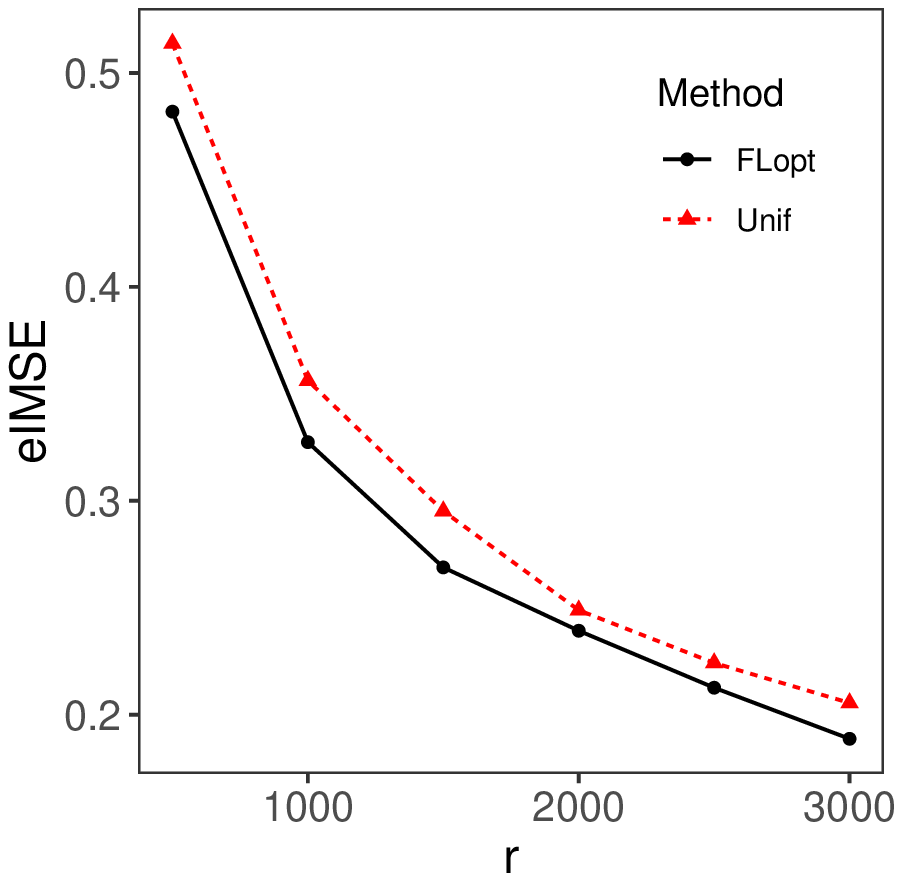}
		\caption{eIMSE for different subsampling size $ r $ when $ \tau=0.5$ (left) and 0.75 (right).}
		\label{fig5}
	\end{figure}
Since the true value $ \beta(t) $ is unknown for real dataset, we use full data estimator instead. 
We calculate the empirical IMSE using $\mathrm{eIMSE}=\frac{1}{1000}\sum_{k=1}^{1000}\sqrt{\int ^1_0 \left\{\tilde{\beta}^{(k)}(t)-\hat{\beta}(t)\right\}^2\mathrm{d}t}$, and compare the FLopt method with the Unif method. Figure \ref{fig5} shows the eIMSE of subsampling estimator for different subsampling size $ r= 500,1000,1500,2000,2500,3000$ when $\tau= 0.5$ and 0.75. We can find that the FLopt method always has smaller eIMSE than the Unif method. All eIMSEs decrease as the subsampling size $ r $ gets large, showing the estimation consistency of the subsampling method.

We further compare these two methods in terms of prediction accuracy. The relative efficiency (RE) is 
defined as follows:
\begin{eqnarray}\label{4.3}
	RE=\frac{\sum_{i}\left[\int ^1_0x_i(t)\tilde{\beta}(t)\mathrm{d}t-\int^1_0 x_i(t)\hat{\beta}(t)\mathrm{d}t\right]^2}{\sum_{i}\left[\int ^1_0x_i(t)\hat{\beta}(t)\mathrm{d}t\right]^2},\quad i\in {\rm test set}.
\end{eqnarray}
Figure \ref{fig6} displays the relative efficiency based on the subsampling method with $ \tau=0.5 $ and 0.75. In general, the relative efficiency of the subsampling estimator gradually decrease as the $ r $ increases, and the FLopt method is better than the Unif method. So it yields a better approximation to the results based on full data.
\begin{figure}[t]
	\centering
		\includegraphics[scale=0.6]{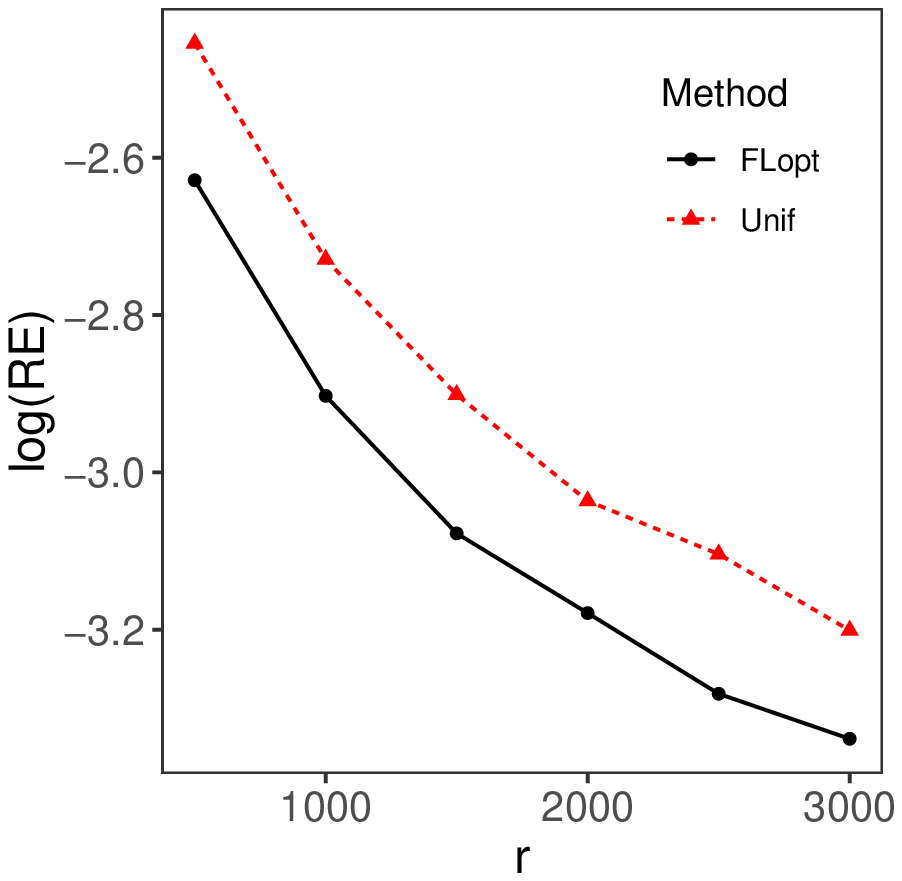}
		\quad
		\includegraphics[scale=0.6]{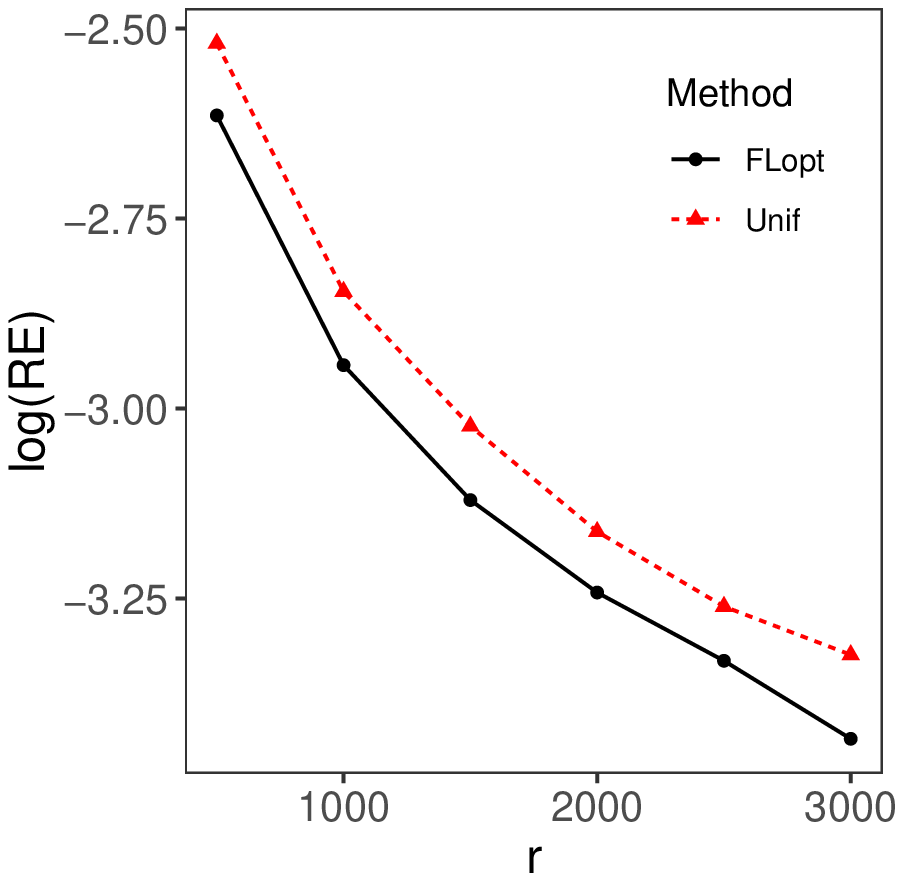}
		\caption{Log relative efficiency for different subsampling size $ r $ when $ \tau=0.5 $ (left) and 0.75 (right) for 1000 repetitions.}
		\label{fig6}
	\end{figure}

\section{Conclusions}
\label{sec.5}
Existing optimal subsampling methods mainly focus on statistic model with scalar variables or functional linear regression. 
In this paper, we develop the optimal subsampling 
for quantile regression model when the covariates are functions. Not only is asymptotic normality estimated, but also the optimal and feasible optimal subsampling probabilites are derived according to the functional A- and L-optimality criterions, respectively. The latter results in the non-informative subsampling, which is more flexible and feasible to apply to other models compared with information sampling. 
Our numerical experiments show that, the FAopt and FLopt methods outperform the Unif subsampling method and are computationally feasible for massive data, and they yield good approximations to the results based on full data.

In this paper, we only consider the subsampling for the scalar-on-function quantile regression at the single quantile level. As done in \cite{Shao2021quantile,Yuan2022quantile}, it is interesting to investigate multiple quantile level. 
Observing that our FLopt sampling probabilities are irrelevant to quantile level, this problem should be doable. In fact,  
a more interesting problem worth further investigations is how to apply optimal subsampling methods to the quantile regression process. In addition, other functional regression models are worth exploring, such as function-on-function regression and function-on-scalar regression.

\backmatter
\bmhead{Acknowledgments}
This work was supported by the National Natural Science Foundation of China (No. 11671060) and the Natural Science Foundation Project of CQ CSTC (No. cstc2019jcyj-msxmX0267).

\section*{Declarations}
The authors declare that they have no conflict of interest.

\begin{appendices}
\section{Proofs for theoretical results 
} \label{A.1}
To prove our theorems, we begin with the following several lemmas. Note that the subsampling model involves two kinds of random errors: sampling error and model error, so we need to consider these two types of randomness in the calculation		
\begin{lemma}\label{lem1} 
	Under Assumptions \ref{A1} and \ref{A5}, for any vector $ \boldsymbol{\mu} \in\mathbb{R}^{K+p+1} $, there are some positive constants $ C_3$, $C_4 $, $ C_5 $ and $ C_6 $ such that 
	\begin{eqnarray*}
		C_3K^{-1}\le\sigma_{min}(\boldsymbol{G})\le \sigma_{max}(\boldsymbol{G})\le C_4K^{-1},\\
		C_5K^{2q-1}\Vert\boldsymbol{\mu}\Vert^2_2\le\boldsymbol{\mu}^T\boldsymbol{D}_q\boldsymbol{\mu}\le C_6K^{2q-1}\Vert \boldsymbol{\mu}\Vert^2_2,
	\end{eqnarray*}
	where $ \sigma_{min}(\cdot) $ and $\sigma_{max}(\cdot) $ denote the smallest and largest eigenvalues of a matrix, respectively. In addition, we have $\Vert \boldsymbol{G}\Vert_{\infty}=O(K^{-1})$ and $\Vert \boldsymbol{D}_q\Vert_{\infty}=O(K^{2q-1})$.
\end{lemma}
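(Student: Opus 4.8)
The proof is entirely deterministic once Assumption \ref{A1} is available, and rests on classical facts about B-splines on the equispaced mesh of spacing $ h = 1/K $; Assumption \ref{A5} enters only through $ K\to\infty $. The plan is to record two ``reference'' estimates for the B-spline system and then read off the three claims from them. Write $ g(t) = \boldsymbol{B}^T(t)\boldsymbol{\mu} $ for $ \boldsymbol{\mu}\in\mathbb{R}^{K+p+1} $ and let $ \boldsymbol{\Gamma} = \int_0^1 \boldsymbol{B}(t)\boldsymbol{B}^T(t)\,\mathrm{d}t $ be the B-spline Gram matrix. Since each $ B_k $ is supported on an interval of length $ (p+1)h $ and is uniformly bounded, at most $ 2p+1 $ of the $ B_k $ overlap at any point, so $ \boldsymbol{\Gamma} $, $ \boldsymbol{G} $ and $ \boldsymbol{D}_q $ are banded with bandwidth $ O(p) $; moreover $ \Vert B_k\Vert_{L^2} = O(K^{-1/2}) $, and because a $ q $-th derivative of a degree-$ p $ B-spline is a $ q $-fold scaled difference of lower-degree B-splines with scale $ h $, $ \Vert B_k^{(q)}\Vert_{L^2} = O(K^{q-1/2}) $. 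The reference estimate I would invoke (de Boor 2001; Zhou--Shen--Wolfe 1998; see also Cardot 2003, Claeskens et al.\ 2009) is the stability bound $ c_1 K^{-1}\Vert\boldsymbol{\mu}\Vert_2^2 \le \boldsymbol{\mu}^T\boldsymbol{\Gamma}\boldsymbol{\mu} = \Vert g\Vert_{L^2}^2 \le c_2 K^{-1}\Vert\boldsymbol{\mu}\Vert_2^2 $.

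For $ \boldsymbol{G} $, note $ \boldsymbol{\mu}^T\boldsymbol{G}\boldsymbol{\mu} = n^{-1}\sum_{i=1}^n (\boldsymbol{B}_i^T\boldsymbol{\mu})^2 $ with $ \boldsymbol{B}_i^T\boldsymbol{\mu} = \int_0^1 x_i(t)g(t)\,\mathrm{d}t $. For the upper bound, Cauchy--Schwarz and Assumption \ref{A1} give $ (\boldsymbol{B}_i^T\boldsymbol{\mu})^2 \le \Vert x_i\Vert_2^2\Vert g\Vert_{L^2}^2 \le C_1^2\Vert g\Vert_{L^2}^2 $, hence $ \boldsymbol{\mu}^T\boldsymbol{G}\boldsymbol{\mu} \le C_1^2\,\boldsymbol{\mu}^T\boldsymbol{\Gamma}\boldsymbol{\mu} \le C_1^2 c_2 K^{-1}\Vert\boldsymbol{\mu}\Vert_2^2 $, i.e.\ $ \sigma_{\max}(\boldsymbol{G}) \le C_4 K^{-1} $. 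The lower bound $ \sigma_{\min}(\boldsymbol{G}) \ge C_3 K^{-1} $ is the only non-routine point and I expect it to be the \emph{main obstacle}: $ \boldsymbol{G} $ is, after the change of basis, the empirical covariance operator $ \widehat{C}(s,t) = n^{-1}\sum_i x_i(s)x_i(t) $ compressed to the degree-$ p $ spline space, and one needs $ n^{-1}\sum_i \big(\int_0^1 x_i g\big)^2 \ge c_0\Vert g\Vert_{L^2}^2 $ uniformly in $ n $ and in $ g $ in that space, which together with the lower de Boor bound gives the claim. This non-degeneracy of the functional design is a standing condition in the companion results of Cardot (2003), Claeskens et al.\ (2009) and liu2021functional; I would state or cite it explicitly, since it is genuinely necessary (if the $ x_i $ spanned a low-dimensional subspace, $ \boldsymbol{G} $ would be near-singular even though $ \Vert x_i\Vert_2\le C_1 $).

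For $ \boldsymbol{D}_q $, write $ \boldsymbol{\mu}^T\boldsymbol{D}_q\boldsymbol{\mu} = \int_0^1 (g^{(q)}(t))^2\,\mathrm{d}t $. Iterating the B-spline differentiation recursion $ q $ times, $ g^{(q)} = \widetilde{\boldsymbol{B}}^T\boldsymbol{\nu} $, where $ \widetilde{\boldsymbol{B}} $ is the system of degree-$ (p-q) $ B-splines on the same mesh (recall $ q\le p $) and $ \boldsymbol{\nu} $ is obtained from $ \boldsymbol{\mu} $ by a banded, bounded-entry linear map scaled by $ h^{-q} = K^q $; thus $ \Vert\boldsymbol{\nu}\Vert_2 = O(K^q\Vert\boldsymbol{\mu}\Vert_2) $, and on the orthogonal complement of the degree-$ (<q) $ polynomials (on which $ \boldsymbol{D}_q $ vanishes) also $ \Vert\boldsymbol{\nu}\Vert_2 $ is bounded below by a constant times $ K^q\Vert\boldsymbol{\mu}\Vert_2 $. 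Applying the de Boor bound to $ \widetilde{\boldsymbol{B}} $ then gives $ \int_0^1 (g^{(q)})^2 = \Vert g^{(q)}\Vert_{L^2}^2 \asymp K^{-1}\Vert\boldsymbol{\nu}\Vert_2^2 \asymp K^{2q-1}\Vert\boldsymbol{\mu}\Vert_2^2 $, i.e.\ $ C_5 K^{2q-1}\Vert\boldsymbol{\mu}\Vert_2^2 \le \boldsymbol{\mu}^T\boldsymbol{D}_q\boldsymbol{\mu} \le C_6 K^{2q-1}\Vert\boldsymbol{\mu}\Vert_2^2 $, the lower inequality being understood on that complement (which is all the subsequent analysis, where $ \boldsymbol{D}_q $ appears only through $ \lambda\boldsymbol{D}_q/n $ added to $ \boldsymbol{G}_\tau $, actually uses).

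Finally, the max-norm bounds follow directly from the support and $ L^2 $-size estimates above, without any quadratic-form machinery: $ |G_{jk}| = \big|n^{-1}\sum_i (\int_0^1 x_i B_j)(\int_0^1 x_i B_k)\big| \le C_1^2\Vert B_j\Vert_{L^2}\Vert B_k\Vert_{L^2} = O(K^{-1}) $ by Cauchy--Schwarz and Assumption \ref{A1}, and $ |D_{q,jk}| = \big|\int_0^1 B_j^{(q)}B_k^{(q)}\big| \le \Vert B_j^{(q)}\Vert_{L^2}\Vert B_k^{(q)}\Vert_{L^2} = O(K^{2q-1}) $, so $ \Vert\boldsymbol{G}\Vert_\infty = O(K^{-1}) $ and $ \Vert\boldsymbol{D}_q\Vert_\infty = O(K^{2q-1}) $ uniformly over the entries.
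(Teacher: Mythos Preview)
The paper does not actually prove this lemma: its entire argument is the one-line citation ``these results can be derived directly from Lemma~S2 and~S3 in Liu et al.\ (2021).'' Your sketch therefore goes well beyond the paper's own treatment and is essentially correct in content. The de~Boor stability bound for $\boldsymbol{\Gamma}$, the Cauchy--Schwarz upper bound on $\boldsymbol{G}$ via Assumption~\ref{A1}, the differentiation recursion for $\boldsymbol{D}_q$, and the entrywise estimates are all standard and applied correctly. You are right to isolate $\sigma_{\min}(\boldsymbol{G})\ge C_3K^{-1}$ as the only non-routine step and to observe that it does not follow from Assumptions~\ref{A1} and~\ref{A5} alone: it requires a uniform non-degeneracy of the empirical second-moment operator of the $x_i$ on the spline space, which the cited references impose but the present paper leaves implicit. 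You are also right that the lower inequality on $\boldsymbol{D}_q$ cannot hold literally for all $\boldsymbol{\mu}$ when $q\ge 1$, since coefficients of polynomials of degree $<q$ lie in its kernel; your restriction to the orthogonal complement is the correct reading, and downstream only $\boldsymbol{H}_\tau=\boldsymbol{G}_\tau+(\lambda/n)\boldsymbol{D}_q$ is ever inverted, so the issue is cosmetic. In short, what you have written is the proof the paper defers to its reference, with the genuine gaps honestly flagged.
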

\begin{proof}
 These results can be derived directly from Lemma S2 and S3 in 
\cite{liu2021functional}.
\end{proof}		
\begin{lemma}\label{lem2} 
	Under Assumptions \ref{A1}, and \ref{A3}--\ref{A5}, there are two positive constants $ C_7 $ and $ C_8 $ such that 
	\begin{eqnarray*}
		C_7K^{-1}\le\sigma_{min}(\boldsymbol{H}_{\tau})\le \sigma_{max}(\boldsymbol{H}_{\tau})\le C_8K^{-1},
	\end{eqnarray*}
	and $\Vert \boldsymbol{H}_{\tau}\Vert_{\infty}=O(K^{-1})$.
\end{lemma}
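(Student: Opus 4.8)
The plan is to view $\boldsymbol{H}_{\tau}=\boldsymbol{G}_{\tau}+(\lambda/n)\boldsymbol{D}_q$ as a small perturbation of $\boldsymbol{G}_{\tau}$. First I would transfer the spectral bounds of $\boldsymbol{G}$ supplied by Lemma~\ref{lem1} to $\boldsymbol{G}_{\tau}$. By Assumption~\ref{A3} there exist constants $0<c_f\le C_f<\infty$ with $c_f\le f_{\epsilon\mid\boldsymbol{X}(t)}(0,x_i(t))\le C_f$ for every $i$. Since $\boldsymbol{G}_{\tau}-c_f\boldsymbol{G}=\frac1n\sum_{i=1}^n\{f_{\epsilon\mid\boldsymbol{X}(t)}(0,x_i(t))-c_f\}\boldsymbol{B}_i\boldsymbol{B}_i^T$ and $C_f\boldsymbol{G}-\boldsymbol{G}_{\tau}=\frac1n\sum_{i=1}^n\{C_f-f_{\epsilon\mid\boldsymbol{X}(t)}(0,x_i(t))\}\boldsymbol{B}_i\boldsymbol{B}_i^T$ are both sums of nonnegatively weighted rank-one positive semidefinite matrices, we have $c_f\boldsymbol{G}\le\boldsymbol{G}_{\tau}\le C_f\boldsymbol{G}$ in the L\"owner order. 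Combining this with $C_3K^{-1}\le\sigma_{min}(\boldsymbol{G})\le\sigma_{max}(\boldsymbol{G})\le C_4K^{-1}$ from Lemma~\ref{lem1} gives $c_fC_3K^{-1}\le\sigma_{min}(\boldsymbol{G}_{\tau})\le\sigma_{max}(\boldsymbol{G}_{\tau})\le C_fC_4K^{-1}$.

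Next I would show the penalty term is of strictly smaller order than $K^{-1}$. By Lemma~\ref{lem1}, $\boldsymbol{D}_q$ is positive semidefinite with $\sigma_{max}(\boldsymbol{D}_q)\le C_6K^{2q-1}$, so $0\le(\lambda/n)\boldsymbol{D}_q$ and $\sigma_{max}\{(\lambda/n)\boldsymbol{D}_q\}\le C_6(\lambda/n)K^{2q-1}$. Assumption~\ref{A4} gives $\lambda=o(n^{1/2}K^{1/2-2q})$, hence $(\lambda/n)K^{2q-1}=o(n^{-1/2}K^{-1/2})=o\bigl(K^{-1}(K/n)^{1/2}\bigr)$, while Assumption~\ref{A5} gives $K=o(n^{1/2})$, so $(K/n)^{1/2}\to 0$. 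Therefore $\sigma_{max}\{(\lambda/n)\boldsymbol{D}_q\}=o(K^{-1})$.

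Finally I would glue the two estimates together with Weyl's inequality. Because $(\lambda/n)\boldsymbol{D}_q\ge 0$, we get $\sigma_{min}(\boldsymbol{H}_{\tau})\ge\sigma_{min}(\boldsymbol{G}_{\tau})\ge c_fC_3K^{-1}$, and $\sigma_{max}(\boldsymbol{H}_{\tau})\le\sigma_{max}(\boldsymbol{G}_{\tau})+\sigma_{max}\{(\lambda/n)\boldsymbol{D}_q\}\le C_fC_4K^{-1}+o(K^{-1})\le C_8K^{-1}$ for a suitable constant $C_8$ and all $n$ large; taking $C_7=c_fC_3$ proves the two-sided eigenvalue bound. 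For the entrywise bound, note that for any symmetric matrix $\boldsymbol{A}=(a_{ij})$ one has $\Vert\boldsymbol{A}\Vert_{\infty}=\max_{ij}|a_{ij}|=\max_{ij}|\boldsymbol{e}_i^T\boldsymbol{A}\boldsymbol{e}_j|\le\sigma_{max}(\boldsymbol{A})$, so that $\Vert\boldsymbol{H}_{\tau}\Vert_{\infty}\le\sigma_{max}(\boldsymbol{H}_{\tau})=O(K^{-1})$.

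The argument is essentially mechanical once Lemma~\ref{lem1} is in hand; the one place that needs care is the middle step, where the rate conditions of Assumptions~\ref{A4} and~\ref{A5} must be combined precisely to certify $(\lambda/n)K^{2q-1}=o(K^{-1})$. This is exactly what prevents the roughness penalty from inflating $\sigma_{max}(\boldsymbol{H}_{\tau})$ beyond order $K^{-1}$, while the positive semidefiniteness of $\boldsymbol{D}_q$ is what keeps $\sigma_{min}(\boldsymbol{H}_{\tau})$ bounded below by the corresponding bound for $\boldsymbol{G}_{\tau}$.
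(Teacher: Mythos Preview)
Your proposal is correct and follows essentially the same approach as the paper's proof: use Assumption~\ref{A3} to sandwich $\boldsymbol{G}_\tau$ between constant multiples of $\boldsymbol{G}$, invoke Lemma~\ref{lem1} for the eigenvalue bounds on $\boldsymbol{G}$ and $\boldsymbol{D}_q$, and then use Assumption~\ref{A4} to show the penalty term $(\lambda/n)\boldsymbol{D}_q$ is negligible relative to $K^{-1}$. The paper's own proof is a two-sentence sketch that simply cites these ingredients; you have supplied the missing mechanics (the explicit L\"owner comparison, Weyl's inequality, and the rate computation combining Assumptions~\ref{A4} and~\ref{A5} to certify $(\lambda/n)K^{2q-1}=o(K^{-1})$), as well as the clean observation that $\Vert\boldsymbol{H}_\tau\Vert_\infty\le\sigma_{max}(\boldsymbol{H}_\tau)$ for the entrywise bound.
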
		
\begin{proof}
From Assumption \ref{A3}, we have that there are two positive constants $ c_{\epsilon} $ and $ C_{\epsilon}  $ such that $ c_{\epsilon}\le f_{\epsilon\mid\boldsymbol{X}(t)}(u,x(t))\le C_{\epsilon} $. On the other hand, by Lemma \ref{lem1}, we have $\Vert\boldsymbol{G}_{\tau}\Vert_{\infty}=O(K^{-1})$. 
Thus, the lemma can be directly proved by combining Lemma \ref{lem1} with Assumption \ref{A4}.
\end{proof}	
\begin{lemma}\label{lem3} 
	Let $ \psi_\tau(u)=\tau-I(u<0) $ and $ u_i=y_i-\boldsymbol{B}^T_i\boldsymbol{\theta}_0 $. Under the same assumptions as Theorem \ref{Th3}, for any non-zero $ \boldsymbol{\delta} \in \mathbb{R}^{K+p+1}$, we have
	\begin{equation}\label{a1}
		-\sqrt{\frac{K}{r}}\sum_{i=1}^{n}\frac{R_i}{n\pi_i}\boldsymbol{B}^T_i\boldsymbol{\delta}\psi_\tau(u_i)=-\sqrt{K}\boldsymbol{W}^T\boldsymbol{\delta}+o_P(1),
	\end{equation}
	where $ \left\{\tau(1-\tau)(\boldsymbol{V}_{\pi}+\eta \boldsymbol{G})\right\}^{-1/2}\boldsymbol{W}\rightarrow N(0,1) $ in distribution.
\end{lemma}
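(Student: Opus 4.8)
The plan is to write the weighted subsample sum as a sum of $r$ conditionally i.i.d.\ terms and push a Lindeberg--Feller central limit theorem for triangular arrays through it, keeping separate books on the two sources of randomness flagged before Lemma~\ref{lem1}: the subsampling draws and the model errors entering through $u_i$. Realize the subsample as $r$ independent indices $i_1,\dots,i_r$ drawn from $\{1,\dots,n\}$ with probabilities $\{\pi_i\}$, so that $R_i=\#\{j:i_j=i\}$; then the left-hand side of (\ref{a1}) equals $-r^{-1/2}\sum_{j=1}^{r}\xi_j$ with $\xi_j=\sqrt{K}\,(n\pi_{i_j})^{-1}\boldsymbol{B}_{i_j}^{T}\boldsymbol{\delta}\,\psi_\tau(u_{i_j})$. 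It is enough to prove the stated one-dimensional limit for each fixed nonzero $\boldsymbol{\delta}$; the vector statement for $\boldsymbol{W}$ then follows by the Cram\'er--Wold device.

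First I would condition on the full data $\mathcal{F}_n$. Since the $x_i(t)$ are non-random and $\beta(t)$ is a fixed unknown function, conditioning on $\mathcal{F}_n$ also fixes the residuals $u_i$ and the values $\psi_\tau(u_i)$, while the (possibly data-dependent) $\pi_i$ are $\mathcal{F}_n$-measurable; hence the $\xi_j$ are genuinely i.i.d.\ given $\mathcal{F}_n$, with conditional mean $\mu_n:=n^{-1}\sqrt{K}\sum_{i}\boldsymbol{B}_i^{T}\boldsymbol{\delta}\,\psi_\tau(u_i)$ and conditional variance $s_n^{2}=\frac{K}{n^{2}}\sum_{i}\frac{(\boldsymbol{B}_i^{T}\boldsymbol{\delta})^{2}}{\pi_i}\psi_\tau(u_i)^{2}-\mu_n^{2}$. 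I would then split $-r^{-1/2}\sum_j\xi_j=-r^{-1/2}\sum_j(\xi_j-\mu_n)-\sqrt{r}\,\mu_n$ and treat the two pieces.

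The centered piece is handled by a conditional CLT. Assumption~\ref{A6} together with Lemma~\ref{lem1} gives $|\xi_j|\le C\sqrt{K}\,\|\boldsymbol{B}_{i_j}\|_2\|\boldsymbol{\delta}\|_2=O(\|\boldsymbol{\delta}\|_2)$, uniformly bounded, so the Lindeberg condition is met trivially once $r\to\infty$; and a Chebyshev argument (using $\mathrm{E}[\psi_\tau(u_i)^{2}]=\tau(1-\tau)+O(K^{-d})$, which follows from $P(u_i<0\mid x_i)=\tau+O(K^{-d})$ via Assumptions~\ref{A2}--\ref{A3}, plus a variance bound that is $O(n^{-1})$ after inserting $(\boldsymbol{B}_i^{T}\boldsymbol{\delta})^2\le\|\boldsymbol{B}_i\|_2^2\|\boldsymbol{\delta}\|_2^2=O(K^{-1})$) yields $s_n^{2}=\tau(1-\tau)K\boldsymbol{\delta}^{T}\boldsymbol{V}_\pi\boldsymbol{\delta}+o_P(1)$. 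For the location piece, write $\sqrt{r}\,\mu_n=\sqrt{r/n}\cdot\sqrt{K}\cdot n^{-1/2}\sum_i\boldsymbol{B}_i^{T}\boldsymbol{\delta}\,\psi_\tau(u_i)$; its mean is $O(\sqrt{r}\,K^{-d})=o(1)$ (negligible because $r=o(K^{2})$ from Assumption~\ref{A6} and $d\ge p+1\ge1$, hence $r=o(K^{2d})$), and a triangular-array CLT for the independent summands $\boldsymbol{B}_i\psi_\tau(u_i)$ gives asymptotic normality with variance $\eta\,\tau(1-\tau)K\boldsymbol{\delta}^{T}\boldsymbol{G}\boldsymbol{\delta}$ (using $n^{-1}\sum_i\boldsymbol{B}_i\boldsymbol{B}_i^{T}=\boldsymbol{G}$ and $r/n\to\eta$). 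The cross term contributes nothing since $\mu_n$ is deterministic given $\mathcal{F}_n$.

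To assemble the pieces I would work through conditional characteristic functions: $\mathrm{E}\!\big[e^{-\mathrm{i}t\,r^{-1/2}\sum_j\xi_j}\mid\mathcal{F}_n\big]=e^{-\mathrm{i}t\sqrt{r}\mu_n-\frac{1}{2}t^{2}s_n^{2}}\big(1+o_P(1)\big)$ uniformly for $t$ in compact sets, then take expectation over $\mathcal{F}_n$ and use that $\sqrt{r}\mu_n$ is asymptotically Gaussian with variance $\eta\tau(1-\tau)K\boldsymbol{\delta}^{T}\boldsymbol{G}\boldsymbol{\delta}$ while $s_n^{2}=\tau(1-\tau)K\boldsymbol{\delta}^{T}\boldsymbol{V}_\pi\boldsymbol{\delta}+o_P(1)$; the two variances add, so $-r^{-1/2}\sum_j\xi_j$ is asymptotically $N\!\big(0,\tau(1-\tau)K\boldsymbol{\delta}^{T}(\boldsymbol{V}_\pi+\eta\boldsymbol{G})\boldsymbol{\delta}\big)$, which after extracting the factor $\sqrt{K}$ is exactly $-\sqrt{K}\boldsymbol{W}^{T}\boldsymbol{\delta}+o_P(1)$ with $\boldsymbol{W}$ the claimed limiting object. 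I expect this last coupling to be the crux: the subsampling fluctuation sits around a \emph{random}, asymptotically Gaussian center $\sqrt{r}\mu_n$ and has a \emph{random} variance $s_n^{2}$, so one cannot simply add two independent Gaussians — one must show $s_n^{2}$ concentrates and that the conditional Lindeberg/Berry--Esseen control is uniform over a set of $\mathcal{F}_n$ of probability tending to one, so that the conditional noise is asymptotically independent of $\mathcal{F}_n$ and the two Gaussian contributions genuinely decouple.
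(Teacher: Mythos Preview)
Your approach is correct and arrives at the same limiting variance, but it is organized differently from the paper. The paper does not split into a conditional CLT plus a CLT for the random center. Instead it treats $U_r$ directly as a triangular array over $i=1,\dots,n$ with summands $\xi_i=-\sqrt{K/r}\,(n\pi_i)^{-1}R_i\,\boldsymbol{B}_i^{T}\boldsymbol{\delta}\,\psi_\tau(u_i)$, computes the unconditional mean (shown to be $O(\sqrt{rK}K^{-(d+1)})=o(1)$) and the unconditional variance via the law of total variance, obtaining $K\tau(1-\tau)\boldsymbol{\delta}^{T}(\boldsymbol{V}_\pi+\eta\boldsymbol{G})\boldsymbol{\delta}$ in one step, and then verifies a third–moment Lindeberg bound using the explicit formula $\mathrm{E}[R_i^{3}]=r(r-1)(r-2)\pi_i^{3}+3r(r-1)\pi_i^{2}+r\pi_i$ together with Assumption~\ref{A6}. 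So the two variance contributions you separate out as ``subsampling noise'' and ``random center'' arise in the paper simply as the $\mathrm{E}[\mathrm{Var}(\cdot\mid\mathcal{F}_n)]$ and $\mathrm{Var}[\mathrm{E}(\cdot\mid\mathcal{F}_n)]$ pieces of the same total–variance identity.

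Your route buys a cleaner handling of the dependence structure: by rewriting the sum over the $r$ i.i.d.\ draws you avoid the issue that the $R_i$ are jointly multinomial (hence correlated), which the paper's one–shot Lindeberg argument glosses over. The price is exactly the coupling step you identify as the crux: you must show $s_n^{2}$ concentrates and that the conditional normal approximation is uniform over high–probability $\mathcal{F}_n$ so that the conditional and marginal Gaussian pieces decouple. The paper's route is shorter and delivers the target variance by a single algebraic identity, at the cost of being less explicit about independence. Either argument proves the lemma.
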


\begin{proof}	
	Set 
\begin{eqnarray*}
	U_r=-\sqrt{\frac{K}{r}}\sum_{i=1}^{n}\frac{R_i}{n\pi_i}\boldsymbol{B}^T_i\boldsymbol{\delta}\psi_\tau(u_i).
\end{eqnarray*}
To prove the asymptotic normality of $ U_r $, it suffices to verify that $ U_r $ satisfies the Lindeberg-Feller conditions. Firstly, the conditional expectation and conditional variance are given by 
\begin{eqnarray*}
	\mathrm{E}\left\{U_r\mid\mathcal{F}_n\right\}&=&-\sqrt{\frac{K}{r}}\sum_{i=1}^{n}\mathrm{E}\left\{\frac{R_i}{n\pi_i}\boldsymbol{B}^T_i\boldsymbol{\delta}\psi_\tau(u_i)\mid\mathcal{F}_n\right\} \nonumber    \\
	&=&-\frac{\sqrt{rK}}{n}\sum_{i=1}^{n}\boldsymbol{B}^T_i\boldsymbol{\delta}\psi_\tau(u_i),\\
	\mathrm{Var}\left\{U_r\mid\mathcal{F}_n\right\}&=&\frac{K}{r}\sum_{i=1}^{n}\mathrm{Var}\left\{\frac{R_i}{n\pi_i}\boldsymbol{B}^T_i\boldsymbol{\delta}\psi_\tau(u_i)\mid\mathcal{F}_n\right\} \nonumber \\ &=&\frac{K}{n^2}\sum_{i=1}^{n}\frac{\pi_i(1-\pi_i)}{\pi^2_i}(\boldsymbol{B}^T_i\boldsymbol{\delta})^2\psi^2_\tau(u_i).
\end{eqnarray*}

From the fact that $ \mathrm{P}(y_i<\int ^1_0 x_i(t)\beta(t)\mathrm{d}t\mid x_i(t))=\tau $, we have
\begin{eqnarray*}
	\mathrm{E}\left\{\psi_\tau(u_i)\mid x_i(t)\right\}
	&=&\tau-\mathrm{E}\left\{I(u_i<0)\mid x_i(t)\right\}\nonumber    \\
	&=&\tau-\mathrm{P}(y_i<\boldsymbol{B}^T_i\theta_0\mid x_i(t))\nonumber    \\
	&=& \tau-\mathrm{P}\left(y_i<\int ^1_0 x_i(t)(\beta(t)+b_a(t)(1+o_P(1)))\mathrm{d}t\mid x_i(t)\right) \nonumber    \\
	&=& - b_i f_{\epsilon\mid\boldsymbol{X}(t)}(0,x_i(t))(1+o_P(1)) \nonumber    \\
	&=& o_P(1),
\end{eqnarray*}
where $ b_i=\int_{0}^{1}x_i(t)b_a(t)\mathrm{d}t $, and the third equality is from the definition of $ \boldsymbol{\theta}_0 $ and the fourth equality is obtained by the Taylor expansion of the cumulative distribution function of the error $ \epsilon_i $ at point $\epsilon_i=0 $. As a result, the unconditional expectation of $ U_r $ can be calculated as
\begin{eqnarray}\label{a2}
	\mathrm{E}\left[U_r\right]&=&-\frac{\sqrt{rK}}{n}\mathrm{E}\left\{\sum_{i=1}^{n}\boldsymbol{B}^T_i\boldsymbol{\delta}\psi_\tau(u_i)\mid x_i(t)\right\}\nonumber    \\
	&=& \frac{\sqrt{rK}}{n}\sum_{i=1}^{n}\boldsymbol{B}^T_i\boldsymbol{\delta} b_if_{\epsilon\mid\boldsymbol{X}(t)}(0,x_i(t))(1+o_P(1)) \nonumber    \\
	&=& O(\sqrt{rK}K^{-(d+1)}).
\end{eqnarray}
More specifically, since $ x_i(t) $ are square integrable functions, by the Cauchy-Schwarz inequality in integral form, there exist constant $ c $ such that
\begin{eqnarray*} 
	\boldsymbol{B}^2_i&=&\left(\int_0^1x_i(t)\boldsymbol{B}(t)\mathrm{d}t\right)^2\\
	&\le&\int_0^1x^2_i(t)\mathrm{d}t\cdot \int_0^1\boldsymbol{B}^2(t)\mathrm{d}t\le c\int_0^1\boldsymbol{B}^2(t)\mathrm{d}t.
\end{eqnarray*}
Similarly, we have
\begin{eqnarray*}
	b_i^2&\le&c\int_0^1b_a^2(t)\mathrm{d}t. 
\end{eqnarray*}
Thus, by the property of B-spline function, $ \int_0^1\boldsymbol{B}(t)\mathrm{d}t = O(K^{-1}) $, and $ b_a(t)=O(K^{-d})$, we can find that $ \Vert\boldsymbol{B}_i \Vert_{\infty}= O(K^{-1})$ and $ b_i=O(K^{-d})$ are satisfied. Putting them together, we obtain 
(\ref{a2}). 

On the other hand, according to total expectation formula, the unconditional variance is given by
\begin{eqnarray}\label{a3}
	\mathrm{Var}\left[U_r\right]&=&\mathrm{Var}\left\{-\sqrt{\frac{K}{r}}\sum_{i=1}^{N}\frac{R_i}{n\pi_i}\boldsymbol{B}^T_i\boldsymbol{\delta}\psi_\tau(u_i)\right\}\nonumber    \\
	&=& \mathrm{E}\left\{\mathrm{Var}\left\{-\sqrt{\frac{K}{r}}\sum_{i=1}^{n}\frac{R_i}{n\pi_i}\boldsymbol{B}^T_i\boldsymbol{\delta}\psi_\tau(u_i)\mid\mathcal{F}_n\right\}\right\} \nonumber    \\
	&\;&+\mathrm{ Var}\left\{\mathrm{E}\left\{-\sqrt{\frac{K}{r}}\sum_{i=1}^{n}\frac{R_i}{n\pi_i}\boldsymbol{B}^T_i\boldsymbol{\delta}\psi_\tau(u_i)\mid\mathcal{F}_n\right\}\right\}.
\end{eqnarray}
We first deal with the first term in (\ref{a3}) as follows
\begin{align}\label{a4}
	&\mathrm{E}\left\{\mathrm{Var}\left\{-\sqrt{\frac{K}{r}}\sum_{i=1}^{n}\frac{R_i}{n\pi_i}\boldsymbol{B}^T_i\boldsymbol{\delta}\psi_\tau(u_i)\mid\mathcal{F}_n\right\}\right\} \nonumber    \\
	=& \frac{K}{n^2}\mathrm{E}\left\{\sum_{i=1}^{n}\frac{\pi_i(1-\pi_i)}{\pi^2_i}(\boldsymbol{B}^T_i\boldsymbol{\delta})^2\psi^2_\tau(u_i)\mid x_i(t)\right\}\nonumber    \\
	=&K\tau(1-\tau)\boldsymbol{\delta}^T\left\{\sum_{i=1}^{n}\frac{\boldsymbol{B}_i\boldsymbol{B}^T_i}{n^2\pi_i}-\sum_{i=1}^{n}\frac{\boldsymbol{B}_i\boldsymbol{B}^T_i}{n^2}\right\}\boldsymbol{\delta}(1+o_P(1)).
\end{align}
Similarly, the second term in (\ref{a3}) equals
\begin{align}\label{a5}
	&\mathrm{Var}\left\{\mathrm{E}\left\{-\sqrt{\frac{K}{r}}\sum_{i=1}^{n}\frac{R_i}{n\pi_i}\boldsymbol{B}^T_i\boldsymbol{\delta}\psi_\tau(u_i)\mid\mathcal{F}_n\right\}\right\}\nonumber\\
	=& \frac{rK}{n^2}\mathrm{Var}\left\{\sum_{i=1}^{n}\boldsymbol{B}^T_i\boldsymbol{\delta}\psi_\tau(u_i)\mid x_i(t)\right\}\nonumber   \\
	=&rK\tau(1-\tau)\boldsymbol{\delta}^T\left(\sum_{i=1}^{n}\frac{\boldsymbol{B}_i\boldsymbol{B}^T_i}{n^2}\right)\boldsymbol{\delta}(1+o_P(1)).
\end{align}
Thus, substituting  (\ref{a4}) and (\ref{a5}) into (\ref{a3}), we have
\begin{eqnarray}\label{a6}
	\mathrm{Var}\left[U_r\right]&=& K\tau(1-\tau)\boldsymbol{\delta}^T\left\{\sum_{i=1}^{n}\frac{\boldsymbol{B}_i\boldsymbol{B}^T_i}{n^2\pi_i}+\frac{r-1}{n}\sum_{i=1}^{n}\frac{\boldsymbol{B}_i\boldsymbol{B}^T_i}{n}\right\}\boldsymbol{\delta}(1+o_P(1))\nonumber    \\
	&=&K\tau(1-\tau)\boldsymbol{\delta}^T\left(\boldsymbol{V}_{\pi}+\eta \boldsymbol{G}\right)\boldsymbol{\delta}(1+o_P(1)).
\end{eqnarray}

 Denote $\xi_i = -\sqrt{\frac{K}{r}}\frac{R_i}{n\pi_i}\boldsymbol{B}^T_i\boldsymbol{\delta}\psi_\tau(u_i)$. We now check the Lindeberg-Feller conditions. For every $ \epsilon>0 $,
\begin{align}\label{a7}
	&\sum_{i=1}^{n}\mathrm{E}\left\{\Vert\xi_i\Vert^2I(\Vert\xi_i\Vert>\epsilon)\right\} \nonumber\\
	\le&\left(\frac{K}{r}\right)^{3/2}\frac{1}{\epsilon}\sum_{i=1}^{n}\mathrm{E}\left\{\Vert\xi_i\Vert^3\right\}\nonumber\\
	\le&\left(\frac{K}{r}\right)^{3/2}\frac{1}{\epsilon}\sum_{i=1}^{n}\mathrm{E}\left\{\frac{R^3_i\Vert \boldsymbol{B}^T_i\boldsymbol{\delta}\Vert^3\Vert\psi_\tau(u_i)\Vert^3}{n^3\pi^3_i}\right\}\nonumber\\
	=&\left(\frac{K}{r}\right)^{3/2}\frac{1}{\epsilon}\sum_{i=1}^{n}\frac{\mathrm{E}\left[R_i^3\right]\mid\boldsymbol{B}^T_i\boldsymbol{\delta}\mid^3\mathrm{E}\left\{\Vert\psi_\tau(u_i)\Vert^3\mid x_i(t)\right\}}{n^3\pi_i^3}\nonumber\\
	=& o_P(1),    	
\end{align}
where 
\begin{equation*}
	\mathrm{E}\left[R_i^3\right]=r(r-1)(r-2)\pi_i^3+3r(r-1)\pi_i^2+r\pi_i,
\end{equation*}
and the last equality holds by combining Assumption \ref{A6}, Lemma \ref{lem1} and the fact that $ \mid\psi_\tau(u_i)\mid\le 1 $. Thus, by Lindeberg-Feller central limit theorem, it can be concluded that as $ n \rightarrow\infty $, $ r \rightarrow\infty $, 
\begin{equation*}
	\frac{U_r-\mathrm{E}\left[U_r\right]}{\sqrt{\mathrm{Var}\left[U_r\right]}} \rightarrow N(0,1)
\end{equation*}
in distribution, which implies that the equation (\ref{a1}) holds because $ \mathrm{E}\left[U_r\right]=O(\sqrt{rK}K^{-(d+1)})=o_P(1) $. This completes the proof.
\end{proof}	
\begin{lemma}\label{lem4} 
	Let $ v_i=\sqrt{K/r}\boldsymbol{B}^T_i\mathrm{\delta} $. Under the same assumptions as Theorem \ref{Th3},
	\begin{eqnarray*}
		\sum_{i=1}^{n}\frac{R_i\int_{0}^{v_i}\{I(u_i \le s)-I(u_i\le 0)\}\mathrm{d}s}{n\pi_i}=\frac{K}{2}\boldsymbol{\delta}^T\boldsymbol{G}_{\tau}\boldsymbol{\delta} +o_P(1) .  	
	\end{eqnarray*} 	
\end{lemma}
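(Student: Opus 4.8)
The plan is to denote the left-hand side by $T_r$, show that $\mathrm{Var}[T_r]\to 0$ so that $T_r=\mathrm{E}[T_r]+o_P(1)$, and then evaluate $\mathrm{E}[T_r]$ by a Taylor expansion. The key preliminary observation is that each summand is nonnegative and is controlled by $|v_i|$ on a rare event: splitting on the sign of $v_i$ and recognizing $I(u_i\le s)-I(u_i\le 0)$ as $I(0<u_i\le s)$ when $v_i\ge 0$ and as $I(s<u_i\le 0)$ when $v_i<0$, one obtains
\[
0\le\int_0^{v_i}\{I(u_i\le s)-I(u_i\le 0)\}\mathrm{d}s\le |v_i|\,I(|u_i|\le|v_i|).
\]
Since Assumption \ref{A1} and the local support of the B-spline basis give $\Vert\boldsymbol{B}_i\Vert_2=O(K^{-1/2})$ uniformly in $i$ (this is already used in the proof of Lemma \ref{lem3} and follows from the lemmas of \cite{liu2021functional}), we have $\max_i|v_i|=\sqrt{K/r}\,\max_i|\boldsymbol{B}_i^T\boldsymbol{\delta}|=O(r^{-1/2})\Vert\boldsymbol{\delta}\Vert_2\to 0$, which is what drives all the error terms below to zero.

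For the mean, $\mathrm{E}[R_i\mid\mathcal{F}_n]=r\pi_i$ gives $\mathrm{E}[T_r\mid\mathcal{F}_n]=(r/n)\sum_{i=1}^{n}\int_0^{v_i}\{I(u_i\le s)-I(u_i\le 0)\}\mathrm{d}s$. Taking a further expectation over the model error (the $x_i(t)$ being non-random), writing $u_i=\epsilon_i-b_i+o(K^{-d})$ with $b_i=\int_0^1 x_i(t)b_a(t)\mathrm{d}t=O(K^{-d})$, and Taylor-expanding the conditional distribution function of $u_i$ at the origin,
\[
\mathrm{E}\left\{\int_0^{v_i}\{I(u_i\le s)-I(u_i\le 0)\}\mathrm{d}s\mid x_i(t)\right\}=\frac{1}{2}f_{\epsilon\mid\boldsymbol{X}(t)}(0,x_i(t))\,v_i^2\,(1+o(1)),
\]
the $o(1)$ being uniform in $i$ by the continuity and uniform boundedness of $f_{\epsilon\mid\boldsymbol{X}(t)}$ near $0$ (Assumption \ref{A3}) together with $\max_i|v_i|\to 0$. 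Summing over $i$ and substituting $v_i^2=(K/r)(\boldsymbol{B}_i^T\boldsymbol{\delta})^2$ yields $\mathrm{E}[T_r]=\frac{K}{2}\boldsymbol{\delta}^T\boldsymbol{G}_{\tau}\boldsymbol{\delta}\,(1+o(1))$; since $\boldsymbol{\delta}^T\boldsymbol{G}_{\tau}\boldsymbol{\delta}\le C_\epsilon\,\boldsymbol{\delta}^T\boldsymbol{G}\boldsymbol{\delta}=O(K^{-1})\Vert\boldsymbol{\delta}\Vert_2^2$ by Lemma \ref{lem1}, the leading term is $O(1)$ and the correction is $o(1)$, so $\mathrm{E}[T_r]=\frac{K}{2}\boldsymbol{\delta}^T\boldsymbol{G}_{\tau}\boldsymbol{\delta}+o(1)$.

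For the variance I would use the law of total variance. The expected conditional variance is handled crudely, using $\mathrm{Var}[R_i/(n\pi_i)\mid\mathcal{F}_n]=r(1-\pi_i)/(n^2\pi_i)$, the nonpositivity of the multinomial covariance correction among the $R_i$, and the bound $|\int_0^{v_i}\cdots|\le|v_i|$ above: with $\max_i(n\pi_i)^{-1}=O(r^{-1})$ (Assumption \ref{A6}) and Lemma \ref{lem1},
\[
\mathrm{E}\big[\mathrm{Var}[T_r\mid\mathcal{F}_n]\big]\le\frac{K}{n}\sum_{i=1}^{n}\frac{(\boldsymbol{B}_i^T\boldsymbol{\delta})^2}{n\pi_i}\le\frac{CK}{r}\,\boldsymbol{\delta}^T\boldsymbol{G}\boldsymbol{\delta}=O(1/r)\Vert\boldsymbol{\delta}\Vert_2^2.
\]
The variance of the conditional mean needs the sharper bound $(\int_0^{v_i}\cdots)^2\le v_i^2\,I(|u_i|\le|v_i|)$: since the $u_i$ are independent and $\mathrm{P}(|u_i|\le|v_i|)=O(|v_i|)=O(r^{-1/2})$,
\[
\mathrm{Var}\big[\mathrm{E}[T_r\mid\mathcal{F}_n]\big]=\frac{r^2}{n^2}\sum_{i=1}^{n}\mathrm{Var}\left[\int_0^{v_i}\{I(u_i\le s)-I(u_i\le 0)\}\mathrm{d}s\right]\le\frac{r^2}{n^2}\,O(r^{-1/2})\sum_{i=1}^{n}v_i^2=O\!\left(\frac{r^{1/2}}{n}\right)\Vert\boldsymbol{\delta}\Vert_2^2,
\]
which tends to $0$ because $r=O(n)$. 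Hence $\mathrm{Var}[T_r]=o(1)$, Chebyshev's inequality gives $T_r=\mathrm{E}[T_r]+o_P(1)$, and combining with the previous paragraph yields the assertion.

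The step I expect to be the main obstacle is the variance of the conditional mean: the naive estimate $\left|\int_0^{v_i}\{I(u_i\le s)-I(u_i\le 0)\}\mathrm{d}s\right|\le|v_i|$ alone only gives $\mathrm{Var}[\mathrm{E}[T_r\mid\mathcal{F}_n]]=O(r/n)$, which need not vanish, so one genuinely has to exploit that the integrand lives on the low-probability event $\{|u_i|\le|v_i|\}$ and that $\Vert\boldsymbol{B}_i\Vert_2=O(K^{-1/2})$ uniformly, so that $\mathrm{P}(|u_i|\le|v_i|)=O(r^{-1/2})$. The only other point that needs care is the uniformity in $i$ of the $o(1)$ in the Taylor step, which rests on Assumption \ref{A3} and $b_i=O(K^{-d})$; everything else is bookkeeping with the orders of $K$, $r$ and $n$.
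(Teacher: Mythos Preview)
Your argument is correct and follows the same overall route as the paper: compute the expectation by iterated conditioning and a Taylor expansion of the conditional CDF, show the variance is $o(1)$, and conclude by Chebyshev. The mean step is essentially identical. The variance bound is where the two diverge. You split via the law of total variance and, for the model-error piece $\mathrm{Var}\bigl[\mathrm{E}[\,\cdot\mid\mathcal{F}_n]\bigr]$, need the sharper inequality $\bigl(\int_0^{v_i}\{\cdots\}\,\mathrm{d}s\bigr)^2\le v_i^2\,I(|u_i|\le|v_i|)$ together with $\mathrm{P}(|u_i|\le|v_i|)=O(|v_i|)$. The paper instead bounds the total variance by the sum of second moments of the individual summands in one stroke (the off-diagonal covariances are nonpositive, since the $R_i$ are negatively correlated given $\mathcal{F}_n$ and the $u_i$ are independent, with nonnegative integrands), uses only the crude bound $\int_0^{v_i}\{\cdots\}\,\mathrm{d}s\le|v_i|$ once to peel off a factor $|v_i|/(n\pi_i)$ from each squared summand, and recognises what remains as being of the same order as the already-computed mean, which is $O(1)$. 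This recycling of the mean calculation yields a variance of order $\sqrt{K/r}\cdot\max_i(n\pi_i)^{-1}\cdot\max_i|\boldsymbol{B}_i^T\boldsymbol{\delta}|\cdot O(1)\to 0$ without ever estimating $\mathrm{P}(|u_i|\le|v_i|)$, so the ``main obstacle'' you flag simply does not arise in the paper's route. Your decomposition is more transparent about the two sources of randomness, but the paper's trick is shorter.
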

\begin{proof}
Let
\begin{eqnarray*}
	M_r=\sum_{i=1}^{n}\frac{R_i\int_{0}^{v_i}\{I(u_i \le s)-I(u_i\le 0)\}\mathrm{d}s}{n\pi_i}.
\end{eqnarray*}
Since
\begin{align*}
	&\mathrm{E}\left\{\frac{R_i\int_{0}^{v_i}\left\{I(u_i \le s)-I(u_i\le 0)\right\}\mathrm{d}s}{n\pi_i}\right\}\\
	= &\mathrm{E}\left\{\mathrm{E}\left\{\frac{R_i\int_{0}^{v_i}\left\{I(u_i \le s)-I(u_i\le 0)\right\}\mathrm{d}s}{n\pi_i}\mid\mathcal{F}_n\right\}\right\}\nonumber    \\
	=& \frac{r}{n}\mathrm{E}\left\{\int_{0}^{v_i}\left\{I(u_i \le s)-I(u_i\le 0)\right\}\mathrm{d}s\mid x_i(t)\right\} \\
	=& \frac{r}{n}\int_{0}^{v_i}\left\{\mathrm{P}\left(y_i<\boldsymbol{B}^T_i\boldsymbol{\theta}_0+s\mid x_i(t)\right)-\mathrm{P}\left(y_i<\boldsymbol{B}^T_i\boldsymbol{\theta}_0\mid x_i(t)\right)\right\}\mathrm{d}s   \\
	=& \frac{\sqrt{rK}}{n} \int_{0}^{\boldsymbol{B}^T_i\boldsymbol{\delta}}\left\{\mathrm{P}\left(y_i<\boldsymbol{B}^T_i\boldsymbol{\theta}_0+l\sqrt{\frac{K}{r}}\mid x_i(t)\right)-\mathrm{P}\left(y_i<\boldsymbol{B}^T_i\boldsymbol{\theta}_0\mid x_i(t)\right)\right\}\mathrm{d}l    \\
	=& \frac{K}{n}\int_{0}^{\boldsymbol{B}^T_i\boldsymbol{\delta}}f_{\epsilon\mid \boldsymbol{X}(t)}(\boldsymbol{B}^T_i\boldsymbol{\theta}_0,x_i(t))l\mathrm{d}l\cdot(1+o_P(1))   \\
	=& \frac{K}{2n}f_{\epsilon\mid \boldsymbol{X}(t)}(\boldsymbol{B}^T_i\boldsymbol{\theta}_0,x_i(t))(\boldsymbol{B}^T_i\boldsymbol{\delta})^2(1+o_P(1)),
\end{align*}
we can obtain the total expectation of $ M_r $ as follows
\begin{eqnarray}\label{a8}
	\mathrm{E}\left[M_r\right]
	&=& \frac{K}{2n}\sum_{i=1}^{n}f_{\epsilon\mid \boldsymbol{X}(t)}(\boldsymbol{B}^T_i\boldsymbol{\theta}_0,x_i(t))(\boldsymbol{B}^T_i\boldsymbol{\delta})^2(1+o_P(1))   \nonumber\\
	&=& \frac{K}{2}\boldsymbol{\delta}^T\left(\frac{1}{n}\sum_{i=1}^{n}f_{\epsilon\mid\boldsymbol{X}(t)}(0+o(1),x_i(t))\boldsymbol{B}_i\boldsymbol{B}^T_i\right)\boldsymbol{\delta}(1+o_P(1)) \nonumber  \\
	&=& \frac{K}{2}\boldsymbol{\delta}^T\boldsymbol{G}_{\tau}\boldsymbol{\delta}(1+o_P(1)).
\end{eqnarray}
Now, we show the total variance of $ M_r $ satisfying $ \mathrm{Var}[M_r]=o_P(1) $. Note that the variance of $ M_r $ can be evaluated as 
\begin{eqnarray}\label{a9}
	\mathrm{Var}\left[M_r\right]
	&\le& \sum_{i=1}^{n}\mathrm{E}\left\{\frac{R_i\int_{0}^{v_i}\{I(u_i \le s)-I(u_i\le 0)\}\mathrm{d}s}{n\pi_i}\right\}^2 \nonumber    \\
	&\le&  \sqrt{\frac{K}{r}}\left\{\mathop{\rm{max}}\limits_{i=1,2,\dots,n}\frac{\Vert \boldsymbol{B}^T_i\boldsymbol{\delta}\Vert}{n\pi_i}\right\}\cdot \mathrm{E}\left[M_r\right]\nonumber\\
	&\le& \sqrt{\frac{K}{r}}\left\{\mathop{\rm{max}}\limits_{i=1,2,\dots,n}\frac{1}{n\pi_i}\right\}\cdot\left\{\mathop{\rm{max}}\limits_{i=1,2,\dots,n}\mid\boldsymbol{B}^T_i\boldsymbol{\delta}\mid\right\}\cdot \mathrm{E}\left[M_r\right],
\end{eqnarray}
where the second inequality 
is from the fact that 
\begin{eqnarray*}
	\int_{0}^{v_i}\{I(u_i \le s)-I(u_i\le 0)\}\mathrm{d}s
	&\le&\left\vert\int_{0}^{v_i}\left\vert\{I(u_i \le s)-I(u_i\le 0)\}\right\vert \mathrm{d}s\right\vert \nonumber\\
	&\le& \sqrt{\frac{K}{r}}\left\vert \boldsymbol{B}^T_i\boldsymbol{\delta}\right\vert,\quad i=1,2,\dots,n.
\end{eqnarray*}
Thus, from (\ref{a8}), (\ref{a9}) and Assumption \ref{A6}, and noting $ \mathrm{E}\left[M_r\right]=O(1) $, we have $ \mathrm{Var}\left[M_r\right]=o_P(\sqrt{K/r^3})=o_P(1) $. As a result, Lemma \ref{lem4} holds by Chebyshev's inequality.
\end{proof}

In the following, we present the proofs of Theorems \ref{Th1}, \ref{Th2}, \ref{Th3}, \ref{Th4}, and \ref{Th5} in turn.
\begin{proof}[Proof of Theorem~{\upshape\ref{Th1}} and {\upshape\ref{Th2}}] 
Theorem \ref{Th1} can be proved similar to Theorem 1 of  
\cite{Yoshida2013quantile}, and Theorem \ref{Th2} can be obtained directly from Theorem \ref{Th1} by considering Assumptions \ref{A4} and \ref{A5}. Here we omit the details.
\end{proof}

\begin{proof}[Proof of Theorem~{\upshape\ref{Th3}}] Let 				
\begin{eqnarray*}
	Z_r(\boldsymbol{\delta})&=&\sum_{i=1}^{n}\frac{R_i(\rho_\tau(u_i-v_i)-\rho_\tau(u_i))}{\pi_i}\nonumber\\
	&\;&+\frac{r\lambda}{2}(\boldsymbol{\theta}_0+\sqrt{\frac{K}{r}}\boldsymbol{\delta})^T \boldsymbol{D}_q(\boldsymbol{\theta}_0+\sqrt{\frac{K}{r}}\boldsymbol{\delta})-\frac{r\lambda}{2}\boldsymbol{\theta}_0^T\boldsymbol{D}_q\boldsymbol{\theta}_0,
\end{eqnarray*}
where $ u_i=y_i-\boldsymbol{B}^T_i\boldsymbol{\theta}_0 $ and $ v_i=\sqrt{r/K}\boldsymbol{B}^T_i\boldsymbol{\delta} $. It is easy to see that this function is convex and minimized at $\sqrt{r/K}(\boldsymbol{\tilde{\theta}}-\boldsymbol{\theta}_0)  $. 

On the other hand, using Knight's identity,
\begin{equation}\label{a10}
	\rho_\tau(u-v)-\rho_\tau(u)=-v\psi_\tau(u)+\int_{0}^{v}\{I(u\le s)-I(u\le 0)\}\mathrm{d}s,
\end{equation}
where $ \psi_\tau(u)=\tau-I(u<0) $, we have
\begin{equation}\label{a11}
	Z_r(\boldsymbol{\delta})=Z_{1r}(\boldsymbol{\delta})+Z_{2r}(\boldsymbol{\delta})+Z_{3r}(\boldsymbol{\delta})+Z_{4r}(\boldsymbol{\delta}),
\end{equation}
where 
\begin{align*}
	&Z_{1r}(\boldsymbol{\delta})=-\sqrt{\frac{K}{r}}\sum_{i=1}^{n}\frac{R_i}{\pi_i}\boldsymbol{B}^T_i\boldsymbol{\delta}\psi_\tau(u_i),\\
	&Z_{2r}(\boldsymbol{\delta})=\sum_{i=1}^{n}\frac{R_i\int_{0}^{v_i}\left\{I(u_i \le s)-I(u_i\le 0)\right\}\mathrm{d}s}{\pi_i},\\
	&Z_{3r}(\boldsymbol{\delta})=\frac{K\lambda}{2}\boldsymbol{\delta}^T \boldsymbol{D}_q\boldsymbol{\delta},\\
	&Z_{4r}(\boldsymbol{\delta})=\sqrt{rK}\lambda\boldsymbol{\theta}_0^T\boldsymbol{D}_q\boldsymbol{\delta}.       	
\end{align*}
From Lemma \ref{lem3}, $ Z_{1r}(\boldsymbol{\delta}) $ in (\ref{a11}) satisfies
\begin{equation}\label{a12}
	\frac{Z_{1r}(\boldsymbol{\delta})}{n}=-\sqrt{K}\boldsymbol{W}^T\boldsymbol{\delta}+o_P(1),
\end{equation}
where $ \left\{\tau(1-\tau)(\boldsymbol{V}_{\pi}+\eta \boldsymbol{G})\right\}^{-1/2}\boldsymbol{W}\rightarrow N(0,1) $ in distribution. Furthermore, Lemma \ref{lem4} and $ Z_{3r}(\boldsymbol{\delta}) $ in (\ref{a11}) yield
\begin{equation}\label{a13}
	\frac{Z_{2r}(\boldsymbol{\delta})}{n}+\frac{Z_{3r}(\boldsymbol{\delta})}{n}=\frac{K}{2}\boldsymbol{\delta}^T\left(\boldsymbol{G}_{\tau}+\frac{\lambda}{n}\boldsymbol{D}_q\right)\boldsymbol{\delta} +o_P(1)
	=\frac{K}{2}\boldsymbol{\delta}^T\boldsymbol{H}_{\tau}\boldsymbol{\delta} +o_P(1).   	
\end{equation}
Therefore, from (\ref{a11}),(\ref{a12}) and (\ref{a13}), we can obtain
\begin{equation*}
	\frac{Z_{r}(\boldsymbol{\delta})}{n}=-\sqrt{K}\boldsymbol{W}^T\boldsymbol{\delta}+\frac{K}{2}\boldsymbol{\delta}^T\boldsymbol{H}_{\tau}\boldsymbol{\delta} +\frac{\sqrt{rK}}{n}\lambda\boldsymbol{\theta}_0^T \boldsymbol{D}_q\boldsymbol{\delta}+ o_P(1).   	
\end{equation*}    

Since $ Z_{r}(\boldsymbol{\delta})/n$ is convex with respect to $ \boldsymbol{\delta} $ and has unique minimizer, from the corollary in page 2 of 
\cite{Hjort2011}, its minimizer, $ \sqrt{r/K}(\boldsymbol{\tilde{\theta}}-\boldsymbol{\theta}_0)$, satisfies that
\begin{equation*}
	\sqrt{\frac{r}{K}}(\boldsymbol{\tilde{\theta}}-\boldsymbol{\theta}_0)=\boldsymbol{H}_{\tau}^{-1}\left(\frac{1}{\sqrt{K}}\boldsymbol{W} -\sqrt{\frac{r}{K}}\cdot\frac{\lambda}{n}\boldsymbol{D}_q\boldsymbol{\theta}_0\right)+ o_P(1).   	
\end{equation*}
Because the random vector is only $ \boldsymbol{W}$ in asymptotic form of $ \boldsymbol{\tilde{\theta}} $ and $ \tilde{\beta}(t)-\beta_0(t)=\boldsymbol{B}^{T}(t)(\boldsymbol{\tilde{\theta}}-\boldsymbol{\theta}_0) $, the expectation of $ \tilde{\beta}(t)-\beta_0(t) $ can be written as
\begin{equation*}
	\mathrm{E}\{\tilde{\beta}(t)-\beta_0(t)\}=b_{\lambda}(t)(1+o_P(1)),   	
\end{equation*}
where $ b_{\lambda}(t)=-\frac{\lambda}{n}\boldsymbol{B}^{T}(t)\boldsymbol{H}_{\tau}^{-1}\boldsymbol{D}_q\boldsymbol{\theta}_0 $. Together with $ \tilde{\beta}(t)-\beta(t)= \tilde{\beta}(t)-\beta_0(t)+ \beta_0(t)-\beta(t)  $, we have the asymptotic bias of $ \tilde{\beta}(t)-\beta(t) $ as
\begin{eqnarray*}
	\mathrm{E}\{\tilde{\beta}(t)-\beta_0(t)\}=b_a(t)(1+o_P(1))+b_{\lambda}(t)(1+o_P(1)).
\end{eqnarray*}
Thus, we have
\begin{align*}
	&\{\boldsymbol{B}(t)^T\boldsymbol{V}\boldsymbol{B}(t)\}^{-1/2}\sqrt{r/K}(\tilde{\beta}(t)-\beta(t)-b_a(t)-b_{\lambda}(t))\\
	=&\{\boldsymbol{B}(t)^T\boldsymbol{V}\boldsymbol{B}(t)\}^{-1/2}\boldsymbol{B}^T(t)\boldsymbol{H}_{\tau}^{-1}\frac{1}{\sqrt{K}}\boldsymbol{W}+o_P(1).
\end{align*}
Combining the fact that 
\begin{align*} \{\boldsymbol{B}(t)^T\boldsymbol{V}\boldsymbol{B}(t)\}^{-1/2}\boldsymbol{B}^T(t)\boldsymbol{V}\boldsymbol{B}(t)\{\boldsymbol{B}(t)^T\boldsymbol{V}\boldsymbol{B}(t)\}^{-1/2}=1, \end{align*} 
by the definition of $ \boldsymbol{W} $ and Slutsky's Theorem, we can obtain for $ t\in [0,1] $, as $r, n\rightarrow \infty $,
\begin{equation*}
	\{\boldsymbol{B}(t)^T\boldsymbol{V}\boldsymbol{B}(t)\}^{-1/2}\sqrt{r/K}(\tilde{\beta}(t)-\beta(t)-b_a(t)-b_{\lambda}(t))\rightarrow N(0,1).
\end{equation*}
Further, from the discussions before Theorem \ref{Th2}, we know that $ b_{\lambda}(t) $ and $ b_a(t) =o_P(1)$ are negligible. Thus, we have
\begin{equation*}
	\{\boldsymbol{B}(t)^T\boldsymbol{V}\boldsymbol{B}(t)\}^{-1/2}\sqrt{r/K}(\tilde{\beta}(t)-\beta(t))\rightarrow N(0,1).
\end{equation*}
So Theorem \ref{Th3} is proved.
\end{proof}

\begin{proof}[Proof of Theorem~{\upshape\ref{Th4}}]
Note that
\begin{eqnarray*}
	\mathrm{tr}(\boldsymbol{V})
	&=&\frac{\tau(1-\tau)}{K}\mathrm{tr}\left[\boldsymbol{H}^{-1}_{\tau}\left(\sum_{i=1}^{n}\frac{\boldsymbol{B}_i\boldsymbol{B}^T_i}{n^2\pi_i}+\eta \sum_{i=1}^{n}\frac{\boldsymbol{B}_i\boldsymbol{B}^T_i}{n}\right)\boldsymbol{H}^{-1}_{\tau}\right]\\
	&=&\frac{\tau(1-\tau)}{Kn^2}\sum_{i=1}^{n}\mathrm{tr}\left[\frac{\boldsymbol{H}^{-1}_{\tau}\boldsymbol{B}_i\boldsymbol{B}^T_i\boldsymbol{H}^{-1}_{\tau}}{\pi_i}\right]\\
	&\;&+\frac{\tau(1-\tau)\eta}{Kn}\sum_{i=1}^{n}\mathrm{tr}\left[ H^{-1}_{\tau}\boldsymbol{B}_i\boldsymbol{B}^T_i\boldsymbol{H}^{-1}_{\tau}\right]\\
	&=&\frac{\tau(1-\tau)}{Kn^2}\sum_{i=1}^{n}\frac{\Vert H^{-1}_{\tau}\boldsymbol{B}_i\Vert_2^2}{\pi_i}+\frac{\tau(1-\tau)\eta}{Kn}\sum_{i=1}^{n}\Vert\boldsymbol{H}^{-1}_{\tau}\boldsymbol{B}_i\Vert_2^2\\
	&=&\frac{\tau(1-\tau)}{Kn^2}\left(\sum_{i=1}^{n}\pi_i\right)\left(\sum_{i=1}^{n}\frac{\Vert H^{-1}_{\tau}\boldsymbol{B}_i\Vert_2^2}{\pi_i}\right)+\frac{\tau(1-\tau)\eta}{Kn}\sum_{i=1}^{n}\Vert H^{-1}_{\tau}\boldsymbol{B}_i\Vert_2^2\\
	&\ge& \frac{\tau(1-\tau)}{Kn^2}\left(\sum_{i=1}^{n}\Vert H^{-1}_{\tau}\boldsymbol{B}_i\Vert_2\right)^2+\frac{\tau(1-\tau)\eta}{Kn}\sum_{i=1}^{n}\Vert H^{-1}_{\tau}\boldsymbol{B}_i\Vert_2^2,
\end{eqnarray*}
where the last inequality is from the Cauchy-Schwarz inequality and the equality in it holds if and only if when $ \pi_i \propto \Vert \boldsymbol{H}^{-1}_{\tau}\boldsymbol{B}_i\Vert_2 $. So the proof is completed by considering $\sum_{i=1}^{n}\pi_i=1 $.
\end{proof}

\begin{proof}[Proof of Theorem~{\upshape\ref{Th5}}] 
	Note that
\begin{eqnarray*}
	\mathrm{tr}\left[\boldsymbol{V}_{\pi}\right]
	&=&\mathrm{tr}\left(\sum_{i=1}^{n}\frac{\boldsymbol{B}_i\boldsymbol{B}^T_i}{n^2\pi_i}\right)=\frac{1}{n^2}\sum_{i=1}^{n}\mathrm{tr}\left(\frac{\boldsymbol{B}_i\boldsymbol{B}^T_i}{\pi_i}\right)\\
	&=&\frac{1}{n^2}\sum_{i=1}^{n}\frac{\Vert \boldsymbol{B}_i\Vert_2^2}{\pi_i}=\frac{1}{n^2}\left(\sum_{i=1}^{n}\pi_i\right)\left(\sum_{i=1}^{n}\frac{\Vert\boldsymbol{B}_i\Vert_2^2}{\pi_i}\right)\\
	&\ge& \frac{1}{n^2}\left(\sum_{i=1}^{n}\Vert\boldsymbol{B}_i\Vert_2\right)^2,
\end{eqnarray*}
where the last inequality is from the Cauchy-Schwarz inequality and the equality in it holds if and only if when $ \pi_i \propto \Vert \boldsymbol{B}_i\Vert_2 $. So the proof is completed by considering $\sum_{i=1}^{n}\pi_i=1 $.
\end{proof}

\end{appendices}



\end{document}